\newtheorem{thm}{Theorem}
\newtheorem{proposition}[thm]{Proposition}
\newtheorem{definition}{Definition}
\newtheorem{assumption}{Assumption}
\title{Total Variation of the Control and Energy of Bilinear Quantum Systems}
\author{
\authorblockN{Nabile Boussa\"{i}d\authorrefmark{1}, Marco Caponigro\authorrefmark{2} and Thomas Chambrion\authorrefmark{3}}
\authorblockA{\authorrefmark{1}Laboratoire de math\'ematiques, Universit\'e de Franche--Comt\'e, 25030 Besan\c{c}on, France\\
{\tt\small Nabile.Boussaid@univ-fcomte.fr}}
\authorblockA{\authorrefmark{2}\'Equipe M2N,
 Conservatoire National des Arts et M\'etiers, 75003 Paris, France \\
{\tt\small marco.caponigro@cnam.fr}}
\authorblockA{\authorrefmark{3}Universit\'e de Lorraine, Institut \'Elie Cartan de Lorraine, UMR 7502,  Vandoeuvre-­l\`es-­Nancy,F-­54506, France  \\
CNRS, Institut \'Elie Cartan de Lorraine, UMR 7502, Vandoeuvre-­l\`es-­Nancy, F-­54506,
France\\
Inria, CORIDA, Villers-l\`es-Nancy, F-54600, France\\
{\tt\small Thomas.Chambrion@univ-lorraine.fr}}
}
\begin{document}

\maketitle
\thispagestyle{empty}
\pagestyle{empty}

\begin{abstract}
In the present note, we give two examples of bilinear quantum systems 
showing good agreement between the total variation of 
the control and the variation of the energy of
solutions,  with bounded or unbounded coupling 
term. The corresponding estimates in terms of 
the total variation of the control appear to be  
optimal.
\end{abstract}

\section{INTRODUCTION}

\subsection{Control of quantum systems}
The state of a quantum system evolving in a Riemannian manifold $\Omega$ is
described by 
its \emph{wave function}, a point $\psi$ in $L^2(\Omega, \mathbf{C})$. When the
system is 
submitted to an electric field (e.g., a laser), the time evolution of the wave
function is 
given, under the dipolar approximation and neglecting decoherence,  by the
Schr\"{o}dinger 
bilinear 
equation:
\begin{equation}\label{EQ_bilinear}
\mathrm{i} \frac{\partial \psi}{\partial t}=(-\Delta  +V(x)) \psi(x,t) +u(t) 
W(x) 
\psi(x,t)
\end{equation}
where $\Delta$ is the Laplace-Beltrami operator on $\Omega$,  $V$ and $W$ are
real 
potential accounting for the properties of the free system and the control
field 
respectively, while 
the real function of the time $u$ accounts for the intensity of the laser. 

In view of applications (for instance in NMR), it is important to know whether and how
it is possible to choose a suitable control $u:[0,T]\to \mathbf{R}$ in order to steer 
(\ref{EQ_bilinear}) from a given initial state to a given target. This question has raised 
considerable interest in the community in the last decade. After the negative results of 
\cite{bms} and \cite{Turinici} excluding 
exact controllability on the natural domain of the operator $-\Delta +V$ when
$W$ is bounded, the first, and 
at this day the only one, description of the attainable set for an example of
bilinear quantum system 
was obtained by (\cite{beauchard,beauchard-coron}).  Further investigations of
the approximate controllability of (\ref{EQ_bilinear}) were conducted using
Lyapunov techniques  
(\cite{nersesyan, Nersy, beauchard-nersesyan,Mirrahimi, MR2168664, mirra-solo}) 
and geometric techniques (\cite{Schrod,Schrod2}).

\subsection{Various notions of energies}

 Quantum control is a trans-disciplinary field where
different communities  use the same word ``energy'' with possibly different meaning.

Mathematically, the energy of system~(\ref{EQ_bilinear}) is any norm in (a subspace of) 
$L^2(\Omega, \mathbf{C})$ and the energy for the control $u$ in any norm in the space of 
admissible controls. 
A recurrent issue when studying systems of the type of (\ref{EQ_bilinear}) is to obtain a 
priori estimates of the energy of the system in terms of some energy of the control. Such 
energy 
estimates are crucial for many reasons, both for mathematical and engineering purposes, 
including for instance the proof of the well-posedness of the system  and the regularity 
of the solutions \cite{katino}, or estimates of the distance between the original infinite 
dimensional systems and some its finite dimensional approximations (see 
Section~\ref{SEC_GGA}
below). 

Physically, the energy of the quantum system (\ref{EQ_bilinear}) with wave function $\psi$ 
is $E(\psi)=\int_{\Omega}\left \lbrack (-\Delta +V) \overline{\psi } \right \rbrack \psi~ 
\mathrm{d}\mu$.
The physical energy is therefore constant in time whenever the control $u$ is
zero. When the control $u$ is nonzero, and provided
suitable 
regularity hypotheses, the energy evolves as 
\begin{equation}\label{EQ_evolution_energy_blse}
\frac{\mathrm{d}E}{\mathrm{d}t}=2 u(t) \Im \left ( \int_{\Omega}\left \lbrack  
(\Delta+V)\overline{\psi} \right \rbrack W\psi~ \mathrm{d}\mu \right ).
\end{equation}
Note that the time derivative of the energy $E$ at time $t$ depends on the value $u(t)$ of 
the intensity of the external field  \emph{and} on the wave function $\psi(t)$.

A natural question is to relate the mathematical energy of the control with the physical 
energy of the system.

Standard candidates for these estimates are the $L^p$ norms $
 \|u\|_{L^p(0,T)}=\left ( \int_0^T |u(t)|^p \mathrm{d}t \right )^{\frac{1}{p}},
 $
 for some suitable $p>0$.
Indeed, many previous works addressed the problem of the optimal control of the system 
(\ref{EQ_bilinear}) for costs involving the 
$L^2$ norm of the control 
(see for instance \cite{PhysRevA.42.1065} or \cite{symeon}).
 The main reason for choosing the $L^2$ norm is the fact that the natural Hilbert 
 structure of $L^2$  
allows the use of the powerful tools of Hilbert optimization.
It is common belief that there is a natural relation of the $L^2$ norm of $u$ 
and the energy of the system. The note~\cite{energy}  showed that, in general, 
the $L^1$-norm provides more information on the evolution of the system {{than}}
other  $L^{p}$-norms for $p>1$.

\subsection{Framework and notations}\label{SEC_notations}
To take advantage of the powerful tools of the theory of linear operators, we reformulate 
the bilinear dynamics (\ref{EQ_bilinear}) in  more abstract framework. In the separable 
Hilbert space $H$, we consider the bilinear system 
\begin{equation}\label{EQ_main}
\frac{\mathrm{d}\psi}{\mathrm{d}t}(t)=A\psi(t) +u(t)B \psi(t)
\end{equation}
where the (time independent) linear operators $A$ and $B$ satisfy some regularity 
assumptions. 
 
\begin{assumption}\label{ASS_ass_general}
The triple $(A,B,\Phi)$ is such that
\begin{enumerate}
\item $A$ is skew-adjoint, possibly unbounded, on its domain $D(A)$;
\label{ASS_general_A_skew_adjoint}
\item $-\mathrm{i}A$ is positive;\label{ASS_general_bounded_below}
\item $B$ is bounded relatively to $A$: there exist $a$ and $b$ in $\mathbf{R}$ such that 
$\|B\psi\|\leq a\|A\psi\|+b\|\psi\|$;\label{ASS_general_B_relativ_bounded}
\item $\Phi=(\phi_j)_{j\in \mathbf{N}}$ is a Hilbert basis of $H$ made of eigenvectors of 
$A$: for every $j$ in $\mathbf{N}$, there exists $\lambda_j$ in $\mathbf{R}$ such that 
$A\phi_j=-\mathrm{i}\lambda_j \phi_j$;\label{ASS_general_Phi_basis}
\end{enumerate}
\end{assumption}
If $A$ and $B$ satisfy Assumption
\ref{ASS_ass_general}.\ref{ASS_general_B_relativ_bounded}, we denote 
\begin{multline*}
\|B\|_{A} = \inf\{a\in \mathbf{R} \mid \exists b \in
\mathbf{R} \\\mbox{ for which }  \|B\psi\|\leq a\|A\psi\|+b\|\psi\|,\; \forall
\psi \in D(A)\}.
\end{multline*}


It is known that if $(A,B,\Phi)$ satisfies Assumption \ref{ASS_ass_general}, then for 
every $u:[0,T_u]\to (-1/\|B\|_A,1/\|B\|_A)$ with bounded variation, there exists
a continuous mapping $t\mapsto \Upsilon^u_t$ 
taking value in the unitary group $\mathbf{U}(H)$ of $H$ such that, for every $\psi$ in 
$D(A)$, $t\mapsto \Upsilon^u_t\psi$ is differentiable almost everywhere and
satsifies (\ref{EQ_main}) for 
almost every $t$ in $(0,T]$. For a proof of this well-posedness result,  see 
\cite{Kato1953} for a general theory of time dependent (non-necessarily skew-adjoint) 
Hamiltonians or \cite{boussaid:hal-00784876} for an elementary proof adapted to the 
bilinear structure of (\ref{EQ_main}).

\begin{definition}
Let $(A,B,\Phi)$ satisfy Assumption \ref{ASS_ass_general}. The system $(A,B)$ is \emph{approximately controllable} if, for every $\psi_0,\psi_1$ in the unit Hilbert sphere, for every $\varepsilon>0$, there exists $u_{\varepsilon}:[0,T_{\varepsilon}]\rightarrow \mathbf{R}$ such that $\|\Upsilon^{u_\varepsilon}_{T_\varepsilon}\psi_0-\psi_1\|<\varepsilon$.
\end{definition}
The following sufficient criterion for approximate controllability is the central result of \cite{Schrod2}, centered on the notion of non-degenerate (or non-resonant) transitions.
\begin{definition}
Let $(A,B,\Phi)$ satisfy Assumption \ref{ASS_ass_general}. A pair $(j,k)$ of integers is a  
\emph{non-degenerate transition} of $(A,B,\Phi)$ if (i)  $\langle \phi_j,B\phi_k \rangle \neq 
0$ and (ii) for every $(l,m)$ in $\mathbf{N}^2$, $|\lambda_j-\lambda_k|=|
\lambda_l-\lambda_m|$ implies $(j,k)=(l,m)$ or $\langle \phi_l, B \phi_m\rangle =0$ or 
$\{j,k\}\cap \{l,m\}=\emptyset$.
\end{definition}
 \begin{definition}
Let $(A,B,\Phi)$ satisfy Assumption \ref{ASS_ass_general}. A subset $S$ of $\mathbf{N}^2$ 
is a \emph{non-degenerate chain of connectedness} of $(A,B,\Phi)$ if
(i) for every $(j,k)$ in $S$, $(j,k)$ is a non-degenerate transition of $(A,B)$ and (ii) 
for every $r_a,r_b$ in $\mathbf{N}$, there exists a finite sequence $r_a=r_0,r_1,
\ldots,r_p=r_b$ in $\mathbf{N}$ such that, for every $j\leq p-1$, $(r_j,r_{j+1})$ belongs 
to $S$.   
\end{definition}
\begin{proposition}
 Let $(A,B,\Phi)$ satisfy Assumption \ref{ASS_ass_general}.  If $(A,B)$ admits a non-degenerate chain of connectedness, then $(A,B)$ is approximately controllable. 
\end{proposition}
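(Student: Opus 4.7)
The plan is to follow the averaging/Galerkin strategy of \cite{Schrod2}, which turns the non-degenerate chain of connectedness into an explicit construction of controls producing any prescribed unitary transformation on finite-dimensional Galerkin truncations, and then to propagate this back to the full system.

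First I would pass to the interaction picture by writing $\psi(t) = \mathrm{e}^{tA}\phi(t)$, so that the coefficients $x_{j}(t) = \langle \phi_j, \phi(t)\rangle$ evolve according to
\begin{equation*}
\dot{x}_j(t) = u(t)\sum_{k} \mathrm{e}^{\mathrm{i}(\lambda_j-\lambda_k)t}\, b_{jk}\, x_k(t),
\end{equation*}
with $b_{jk}=\langle \phi_j, B\phi_k\rangle$. The key heuristic is classical rotating-wave analysis: if $(j,k)$ is a non-degenerate transition and one feeds a control of the form $u(t) = \frac{v(t)}{b_{jk}}\cos(|\lambda_j-\lambda_k|t + \theta)$ with $v$ of small amplitude but large $L^1$-integral, then on the timescale on which $v$ varies the only matrix entries of the time-dependent operator $u(t)\mathrm{e}^{-tA}B\mathrm{e}^{tA}$ whose time-averages survive are the $(j,k)$ and $(k,j)$ entries; non-degeneracy guarantees that every other pair $(l,m)$ with $|\lambda_l-\lambda_m|=|\lambda_j-\lambda_k|$ has $b_{lm}=0$ or is disjoint from $\{j,k\}$, so cross-resonances do not perturb the two-level dynamics on $\mathrm{span}(\phi_j,\phi_k)$. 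Choosing $\theta$ and the $L^1$-mass of $v$ appropriately, one therefore produces, on the Galerkin truncation, an arbitrary $\mathrm{SU}(2)$ rotation on the block $(\phi_j,\phi_k)$, up to an error that can be made arbitrarily small by taking $\|v\|_\infty$ small.

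Next I would chain these elementary rotations. Given $\psi_0,\psi_1$ on the unit sphere and $\varepsilon>0$, approximate them in $H$ by finite linear combinations $\psi_0^{(N)},\psi_1^{(N)}$ supported on $\{\phi_1,\ldots,\phi_N\}$ using Assumption~\ref{ASS_ass_general}.\ref{ASS_general_Phi_basis}; because $S$ is a chain of connectedness there exists, for any two indices in $\{1,\ldots,N\}$, a finite path in $S$ joining them, so by concatenating two-level resonant rotations along the edges one can realise on the Galerkin truncation any unitary of $\mathrm{SU}(N)$ and in particular one mapping $\psi_0^{(N)}$ to $\psi_1^{(N)}$ within $\varepsilon/3$. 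This step is purely finite-dimensional and relies on the classical fact that every unitary is a product of Givens rotations in a connected graph.

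The remaining, and technically most delicate, step is to show that the control $u$ built for the Galerkin system actually steers the infinite-dimensional system \eqref{EQ_main} within $\varepsilon$ of the desired target. Here I would invoke the ``good Galerkin approximation'' estimates available under Assumption~\ref{ASS_ass_general}: the $A$-relative bound on $B$ together with skew-adjointness of $A$ and positivity of $-\mathrm{i}A$ give a uniform bound, depending only on $\|B\|_A$ and $\|u\|_{BV}$, on $\|A\Upsilon^u_t\psi\|$, and hence control of the tail energy in modes $\phi_j$ with $j>N$. Choosing $N$ large enough kills this tail, choosing the resonant amplitudes small enough kills the averaging error, and combining these three errors yields the required $\varepsilon$. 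The main obstacle is really this last propagation step: one must show that the non-resonant oscillatory terms neglected in the rotating wave argument do not accumulate on the infinitely many modes, and this is exactly where the relative boundedness assumption and the BV control of $u$ are used quantitatively.
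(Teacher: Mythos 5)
The paper itself does not prove this proposition: it is quoted as the central result of \cite{Schrod2}, so there is no in-paper argument to match yours against. Your first two steps (interaction picture, rotating-wave analysis of a non-degenerate transition, concatenation of two-level rotations along the chain to steer the Galerkin truncation) are indeed the strategy of \cite{Schrod2}, modulo the usual caveat that the averaging construction controls the finite-dimensional dynamics only up to phases, which is enough for approximate controllability of states but not quite the ``arbitrary element of $\mathrm{SU}(N)$'' you claim.

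The genuine gap is in your last step, and it matters. You propose to pass from the Galerkin system to \eqref{EQ_main} via an a priori bound on $\|A\Upsilon^u_t\psi\|$ obtained from the relative bound of $B$ and the BV norm of $u$, arguing that this controls the tail $\|(1-\pi_N)\Upsilon^u_t\psi\|$. Under Assumption~\ref{ASS_ass_general} alone this does not work: nothing in the assumption forces $\lambda_j\to\infty$ (take $A$ bounded), so a bound on the $A$-norm gives no tail decay; and even when $\lambda_j\to\infty$, bounding the tail of the \emph{state} is not the same as bounding the discrepancy between $\Upsilon^u_t$ and the Galerkin propagator $X^u_{(\Phi,N)}$ --- the paper itself only obtains Good Galerkin Approximations under the strictly stronger hypotheses of Assumption~\ref{ASS_weakly_coupled} or $\|B\psi\|\leq d\||A|^r\psi\|$ with $r<1$, not under mere $A$-relative boundedness ($r=1$). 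The proof in \cite{Schrod2} closes this step by a different mechanism: one compares only the finitely many components $\langle\phi_k,\cdot\rangle$, $k\leq M$, of the true and Galerkin evolutions (a Gronwall estimate whose error is governed by $\sup_{k\leq M}\|(1-\pi_N)B\phi_k\|$ and the $L^1$ norm of the control, both independent of the high modes), and then uses unitarity: if $|\langle\phi_k,\Upsilon^u_T\phi_j\rangle|$ is close to $1$, then $\Upsilon^u_T\phi_j$ is close to $\phi_k$ up to phase. You should replace your energy-estimate argument by this kind of low-mode comparison, or strengthen the hypotheses to those under which the Good Galerkin Approximation actually holds.
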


\subsection{Main result}

The contribution of this note is to show the good agreement between the total variation of 
the control and the variation of the $A$-norm of the wave function. The $A$-norm, equal to 
$\|A\psi\|$ for every $\psi$ in $D(A)$ is not equal, in general, to the energy 
$\||A|^{1/2}\psi\|$. However, if $\phi_j$ is an eigenvector of $A$  with associated
eigenvalue $-\mathrm{i}\lambda_j$, then
$\|A\phi_j\|=|\lambda_j|=\||A|^{1/2}\phi_j\|^2$. 

We have to distinguish between the cases where $B$ is bounded and when it is not. 

\subsubsection{Bounded case}

When $B$ is bounded, the growth of the $A$ norm of $\Upsilon^u_t\psi$ is at most linear 
with respect to the total variation of the control (see Section 
\ref{SEC_estimee_bounded}). We present, in  Section~\ref{SEC_example_bounded}, an example for which the growth is indeed linear. 
More precisely, we will show the following.
\begin{proposition}\label{PRO_main_bounded}
There exists $(A,B,\Phi)$ satisfying Assumption \ref{ASS_ass_general} with $B$ bounded such that, for 
every $M$ in $\mathbf{R}$, there exists $u_M:[0,T_M]\to \mathbf{R}$ with bounded variation with $  \|A \Upsilon^{u_M}_{T_M} \phi_1\| \geq M$ and 
${ M \geq \frac{\|B\|}{4} TV_{[0,T_M]}(u_M)}$.
\end{proposition}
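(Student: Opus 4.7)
The plan is to produce an explicit ladder example together with a family of resonant sinusoidal controls. Take $H=\ell^2(\mathbf{N}^\ast)$ with canonical basis $\Phi=(\phi_j)_{j\geq 1}$, let $A$ be the skew-adjoint operator defined by $A\phi_j=\mathrm{i}j\phi_j$ (so that $\|A\phi_j\|=j$ and $-\mathrm{i}A$ is positive), and set $B\phi_j=\mathrm{i}(\phi_{j-1}+\phi_{j+1})$ with the convention $\phi_0:=0$. One checks that $B$ is bounded and skew-adjoint with $\|B\|=2$, that Assumption~\ref{ASS_ass_general} holds, and that the inequality to be established becomes $TV_{[0,T_M]}(u_M)\leq 2M$. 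Since every adjacent level spacing equals one, a single resonant drive $u_M(t)=\varepsilon\cos t$ simultaneously excites the whole ladder, which motivates taking exactly this family of controls with $\varepsilon$ small and $T_M$ chosen so that $\varepsilon T_M$ is of order $M$.

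To analyse the evolution, set $\tilde\psi(t)=\mathrm{e}^{-tA}\Upsilon^{u_M}_t\phi_1$ and observe that the coefficients $\tilde c_j(t)=\langle\phi_j,\tilde\psi(t)\rangle$ satisfy
\begin{equation*}
\dot{\tilde c}_j=\tfrac{\mathrm{i}\varepsilon}{2}(\tilde c_{j-1}+\tilde c_{j+1})+\tfrac{\mathrm{i}\varepsilon}{2}\bigl(\mathrm{e}^{2\mathrm{i}t}\tilde c_{j+1}+\mathrm{e}^{-2\mathrm{i}t}\tilde c_{j-1}\bigr),
\end{equation*}
with $\tilde c_0\equiv 0$ and $\tilde c_j(0)=\delta_{j,1}$. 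Dropping the oscillating remainder leaves a discrete free Schr\"odinger equation on the half-line, which is solved in closed form by the reflection (image) method:
\begin{equation*}
c_j(t)=\mathrm{i}^{j-1}\bigl(J_{j-1}(\varepsilon t)+J_{j+1}(\varepsilon t)\bigr)=\mathrm{i}^{j-1}\frac{2j}{\varepsilon t}J_j(\varepsilon t),
\end{equation*}
using the Bessel recursion $J_{j-1}+J_{j+1}=(2j/x)J_j$. Parseval applied to the Bessel generating function $\mathrm{e}^{\mathrm{i}x\sin\theta}=\sum_n J_n(x)\mathrm{e}^{\mathrm{i}n\theta}$ produces the moment identity $\sum_{n\in\mathbf{Z}}n^4 J_n(x)^2=x^2/2+3x^4/8$, from which the explicit value $\|Ac(T_M)\|^2=1+\tfrac34(\varepsilon T_M)^2$ follows. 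Choosing $\varepsilon T_M=2M/\sqrt{3}$ ensures $\|Ac(T_M)\|\geq M$, while the elementary bound $TV_{[0,T_M]}(\varepsilon\cos t)\leq 2\varepsilon T_M/\pi+2\varepsilon=4M/(\pi\sqrt{3})+2\varepsilon$ is strictly less than $2M$ as soon as $M$ is large enough and $\varepsilon$ is small enough.

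The main technical difficulty will be to transfer this computation from the rotating wave approximation to the exact evolution uniformly over the long time horizon $T_M\sim M/\varepsilon$. This is handled by a standard averaging / integration-by-parts argument on the oscillating remainder: writing $\mathrm{e}^{\pm 2\mathrm{i}t}=(\pm 2\mathrm{i})^{-1}\tfrac{\mathrm{d}}{\mathrm{d}t}\mathrm{e}^{\pm 2\mathrm{i}t}$ and integrating by parts, together with the ballistic confinement of the support of $\tilde c(t)$ inside $[1,O(\varepsilon t)]$ inherited from the Bessel closed form, shows that the difference between the true $\tilde c$ and the rotating-wave $c$ is a vanishing fraction of $\|Ac\|$ as $\varepsilon\to 0$. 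Hence for $\varepsilon$ sufficiently small both $\|A\Upsilon^{u_M}_{T_M}\phi_1\|\geq M$ and $TV(u_M)\leq 2M$ are preserved, yielding the desired inequality $M\geq \tfrac{\|B\|}{4}TV(u_M)$.
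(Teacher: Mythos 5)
Your route is genuinely different from the paper's. The paper works with the planar rotor ($\lambda_k=k^2$, $B=$ multiplication by $\cos\theta$), where the gaps $2j+1$ are all distinct, so each transition $(j,j+1)$ is non-degenerate; it then climbs the ladder with a \emph{concatenation} of controls $\sin((2j+1)t)/n_j$, one per transition, and discharges all convergence obligations by citing Proposition~\ref{PRO_periodic} together with the Good Galerkin Approximation results, finally comparing $M=N^2$ with $TV=4N^2$. You instead take an equispaced ladder and a \emph{single} fully resonant drive $\varepsilon\cos t$, and solve the rotating-wave dynamics in closed form via Bessel functions. Your closed-form computations are correct (the image-method solution, the recursion $J_{j-1}+J_{j+1}=(2j/x)J_j$, the moment identity $\sum_n n^4J_n(x)^2=x^2/2+3x^4/8$, hence $\|Ac\|^2=1+\tfrac34(\varepsilon T_M)^2$, and the bound $TV\leq 2\varepsilon T_M/\pi+2\varepsilon$), and the margin between $4M/(\pi\sqrt3)$ and $2M$ is comfortable. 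If completed, your argument would be more self-contained and quantitatively sharper on a single example; the paper's buys simplicity by outsourcing the hard analysis to previously proved averaging and Galerkin results.

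The gap is in the step you yourself flag as the main difficulty, and it is not merely technical. First, because every gap of your ladder equals $1$, \emph{no} transition is non-degenerate in the sense of the paper (for $(1,2)$ and $(2,3)$ one has equal gaps, $b_{23}\neq0$ and $\{1,2\}\cap\{2,3\}\neq\emptyset$), so Proposition~\ref{PRO_periodic} and the surrounding machinery are unavailable to you: you must prove your own averaging lemma for a fully resonant, infinite-dimensional ladder over times $T_M\sim M/\varepsilon$. Second, the integration-by-parts/Gronwall argument you sketch yields closeness of $\tilde c$ and $c$ in the $\ell^2$ norm only; what you need is a \emph{lower} bound on $\|A\tilde c(T_M)\|$, i.e.\ control in the unbounded $A$-weighted norm, and $\ell^2$ closeness does not give that. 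Your appeal to ``ballistic confinement of the support of $\tilde c(t)$ \ldots inherited from the Bessel closed form'' is circular: the Bessel formula describes the averaged solution $c$, not the exact $\tilde c$. The gap is fixable --- e.g.\ show that a fixed positive fraction of the mass of $|c_j|^2=4j^2J_j(x)^2/x^2$ sits on modes $j\geq \alpha x$, then use $\|A\psi\|^2\geq J^2\|(1-\pi_{J-1})\psi\|^2$ with $J=\alpha\varepsilon T_M$ and the $\ell^2$ averaging error to conclude, possibly after rescaling the target from $M$ to $2M$ (the slack in $TV\leq 2M$ absorbs this) --- but as written the decisive inequality $\|A\Upsilon^{u_M}_{T_M}\phi_1\|\geq M$ is not established.
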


\subsubsection{Unbounded case}
When $B$ is unbounded, the growth of the $A$ norm of $\Upsilon^u_t\psi$ is at most 
exponential with respect to the total variation of the control (see Section 
\ref{SEC_estimee_unbounded}). We present, in Section~\ref{SEC_example_unbounded}, an example for which the growth is indeed 
exponential. 
More precisely, we will show the following.
\begin{proposition}\label{PRO_main_unbounded}
There exists  a triple $(A,B,\Phi)$ satisfying Assumption \ref{ASS_ass_general} with $B$ 
unbounded such that, for 
every $M$ large enough in $\mathbf{R}$, there exists $u_M:[0,T_M]\to \mathbf{R}$ with bounded variation with $  \|A \Upsilon^{u_M}_{T_M} \phi_1\| \geq M$ and 
$\displaystyle{ M \geq 4 \exp \left (\sqrt{\frac{2}{3}}\|B\|_A TV_{[0,T_M]}(u_M) \right
)-6}$.
\end{proposition}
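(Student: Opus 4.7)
The plan is to exhibit an explicit tridiagonal example $(A,B,\Phi)$ on $H=\ell^2(\mathbf{N})$ with a geometrically growing spectrum, together with a family of controls that climb the spectrum through a sequence of near-perfect resonant $\pi$-pulses. First I would set $A\phi_j=-\mathrm{i}\lambda_j\phi_j$ with $\lambda_j$ growing at a geometric rate $r>1$ (with additive and multiplicative constants tuned so that $-\mathrm{i}A$ is positive and so that the universal constants $4$ and $-6$ appear in the final estimate), and $B\phi_j=b_{j-1}\phi_{j-1}+b_j\phi_{j+1}$ with coefficients $b_j$ of the form $b_j\propto r^j$, chosen so that the ratios $\omega_j/b_j$, where $\omega_j=\lambda_{j+1}-\lambda_j$, are independent of $j$ and so that $\|B\|_A$ takes the value dictated by the final matching of constants. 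Verifying Assumption~\ref{ASS_ass_general} is then a direct computation on Jacobi matrices.

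Second, for each $k$ I would design a bounded-variation pulse $u^{[k]}$ whose effect on $\phi_k$ is, to leading order, a full transfer to $\phi_{k+1}$: the natural choice is a cosinusoidal pulse at the Bohr frequency $\omega_k$, with amplitude and duration tuned to produce a $\pi$-rotation in the two-level block $(\phi_k,\phi_{k+1})$ in the interaction picture. The total variation of such a pulse is controlled by the ratio $\omega_k/b_k$, so the choice $b_k\propto r^k$ makes the total variation per pulse a common constant $v^{*}>0$ independent of $k$.

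Third, concatenating pulses $u^{[1]},u^{[2]},\ldots,u^{[N]}$ defines $u_M$ with $TV_{[0,T_M]}(u_M)\leq v^{*}N$, while the final state satisfies $\|A\Upsilon^{u_M}_{T_M}\phi_1\|\geq\lambda_{N+1}-\varepsilon_N$. Using the geometric growth $\lambda_{N+1}\sim 4r^N-6$ and eliminating $N$ via $N\geq TV_{[0,T_M]}(u_M)/v^{*}$ yields a bound of the form $M\geq 4\exp\bigl((\log r)/v^{*}\cdot TV_{[0,T_M]}(u_M)\bigr)-6$; the parameters of Step~1 are then fixed by imposing $\log r=v^{*}\sqrt{2/3}\|B\|_A$, which determines the particular triple $(A,B,\Phi)$ one should use.

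The main obstacle will be controlling the error $\varepsilon_N$ in Step~3: the interaction-picture reduction of each pulse to a Rabi problem on the block $(\phi_k,\phi_{k+1})$ is only approximate, because the tridiagonal $B$ also couples $\phi_k$ to $\phi_{k-1}$ and $\phi_{k+2}$, producing off-resonant leakage. The saving grace is that in a geometric spectrum the detunings $|\omega_l-\omega_k|$ grow geometrically with $|l-k|$, so standard rotating-wave estimates should provide summable error terms; the delicate point is keeping the accumulated error uniform in $N$, so that the stated lower bound is retained for every $M$ large enough.
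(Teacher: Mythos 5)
Your overall strategy --- climb a tridiagonal ladder by resonant pulses and compare $\lambda_N$ with the accumulated total variation --- is the same as the paper's, but the example you propose is different in a way that matters, and as specified it cannot deliver the stated inequality. The paper does not use a geometric spectrum: it takes the perturbed harmonic oscillator $A=-\mathrm{i}\left\lbrack(-\Delta+x^2)+(-\Delta+x^2)^{-1}\right\rbrack$, $B=-\mathrm{i}x^2$ on odd functions, where the gaps $\omega_j=\lambda_{j+1}-\lambda_j$ stay bounded (close to $4$) while the couplings $|b_{j,j+1}|=\sqrt{(j+1)(j+\frac{1}{2})}$ grow linearly. The total variation of the $j$-th sinusoidal pulse is then asymptotically $2\omega_j/|b_{j,j+1}|=O(1/j)$, so the total variation after $N$ rungs is $O(\log N)$ while $\lambda_N\sim 4N$: the exponential relation between energy and total variation comes from climbing getting progressively \emph{cheaper}, not from the spectrum growing geometrically.

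The concrete gap in your version is quantitative and, I believe, fatal to the choice of constants. Since $\|B\phi_j\|\geq |b_j|$, the definition of $\|B\|_A$ forces $\limsup_j |b_j|/\lambda_j\leq\|B\|_A$. With $\lambda_j\sim\lambda_1 r^j$ the gaps are $\omega_j=(r-1)\lambda_j(1+o(1))$, and the total variation of a resonant cosinusoidal $\pi$-pulse on the rung $(j,j+1)$ converges to $2\omega_j/|b_j|$ (this is exactly the computation of Section~\ref{SEC_averaging_sinus}, and is independent of the pulse amplitude), hence $v^{*}\geq 2(r-1)/\|B\|_A\,(1+o(1))$. Your matching condition $\log r=\sqrt{2/3}\,\|B\|_A v^{*}$ therefore requires $\log r\geq 2\sqrt{2/3}\,(r-1)>r-1$, which is false for every $r>1$; the exponential rate you can actually reach is at most $\|B\|_A\log r/(2(r-1))<\|B\|_A/2<\sqrt{2/3}\,\|B\|_A$. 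Separately, the error control you defer to ``standard rotating-wave estimates'' is precisely the step that has no off-the-shelf justification for your example: the paper passes from the finite-dimensional averaging result (Proposition~\ref{PRO_periodic}) to the full system via Good Galerkin Approximations (Propositions~\ref{PROP_GGAWeaklyCoupled} and~\ref{PROP_GGA}), which require either $\|B\psi\|\leq d\||A|^r\psi\|$ with $r<1$ or finiteness of $c_k(A,B)$; with $|b_j|\propto\lambda_j$ and geometric growth both fail (for instance $|b_j|(\lambda_{j+1}^k-\lambda_j^k)/\lambda_j^k\propto\lambda_j\to\infty$), so the leakage estimate would have to be built from scratch. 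Replacing your geometric ladder by one with bounded gaps and linearly growing couplings resolves both problems at once and is essentially what the paper does.
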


\subsection{Content of the paper}
In Section~\ref{SEC_various_estimates}, we review some classical estimates for the growth 
of  $|A|^r$-norms of the wave function in terms of $L^p$ norms 
(Section~\ref{SEC_Lp_estimates}) and total variation (Section~\ref{SEC_BV_estimates}) of 
the control. Some examples of use of these estimates for the approximation of the infinite 
dimensional system (\ref{EQ_main}) by its finite dimensional approximations are given in 
Section \ref{SEC_GGA}. Section~\ref{SEC_periodic} is a quick survey of basic facts about 
averaging theory for finite dimensional bilinear systems.  These convergence results will 
be instrumental in Section \ref{SEC_Examples} to prove Proposition \ref{PRO_main_bounded} 
(Section~\ref{SEC_example_bounded}) and Proposition~\ref{PRO_main_unbounded} 
(Section~\ref{SEC_example_unbounded}).

\section{SOME ENERGY ESTIMATES}\label{SEC_various_estimates}

\subsection{Weakness of $L^p$ estimates}\label{SEC_Lp_estimates}

 Let $(A,B,\Phi)$ satisfy Assumption \ref{ASS_ass_general} and admit a non-degenerate chain of connectedness. For every $r>0$, for every $j,k$ in $\mathbf{N}$ and $\varepsilon>0$
we define
 $ \mathcal{A}_r^{\varepsilon}(j,k)$ as the set of  functions $u:[0,T_u]\rightarrow 
 \mathbf{R}$ in $L^1([0,T_u])\cap L^r([0,T_u])$ such that $\|
 \Upsilon^{u}_{T_u}\phi_j-\phi_k\|<\varepsilon$.
We consider the quantity 
$$
\mathcal{C}_r(\phi_{j},\phi_{k})=\sup_{\varepsilon>0} \left ( \inf_{ u \in 
\mathcal{A}_r^{\varepsilon}(j,k)} \|u\|_{L^r(0,T_u)} \right).
$$  
This quantity is the infimum of the $L^{r}$-norm of a control achieving approximate 
controllability. It clearly satisfies the triangle inequality.
Next proposition states that $\mathcal{C}_{r}$ is a distance on the space of eigenlevels 
only when $r=1$. 
Its proof is given in \cite{energy}.
\begin{proposition}\label{PRO_distance_L1}
$\mathcal{C}_1$ is a distance on the set $\{\phi_j,j\in \mathbf{N}\}$. 
For $r>1$, $\mathcal{C}_r$ is equal to zero on the set $\{\phi_j,j\in \mathbf{N}\}$.
\end{proposition}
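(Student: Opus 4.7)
The plan is to establish the two statements by combining a Duhamel-type stability estimate for small $L^1$ controls with the resonant/averaging construction described in Section~\ref{SEC_periodic}. Of the four distance axioms, nonnegativity and $\mathcal{C}_1(\phi_j,\phi_j)=0$ are trivial (take $u\equiv 0$, $T=0$); the triangle inequality is already noted in the statement; symmetry follows from a time-reversal argument together with the unitarity of $\Upsilon^u_T$. The substantive content is therefore (i) strict positivity of $\mathcal{C}_1$ on distinct eigenstates and (ii) vanishing of $\mathcal{C}_r$ on all pairs of eigenstates when $r>1$.

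For (i), I would work in the interaction picture and use the Duhamel identity
$$\Upsilon^u_T\phi_j-e^{TA}\phi_j=\int_0^T e^{(T-s)A}\,u(s)\,B\,\Upsilon^u_s\phi_j\,ds.$$
The unitarity of $e^{(T-s)A}$ together with the (relative) boundedness of $B$ --- supplemented, in the unbounded case, by an a priori control on $\|A\Upsilon^u_s\phi_j\|$ of the kind discussed in Section~\ref{SEC_estimee_unbounded} --- yields
$$\|\Upsilon^u_T\phi_j-e^{-\mathrm{i}\lambda_jT}\phi_j\|\leq C\,\|u\|_{L^1}$$
for some constant $C>0$. Projecting on $\phi_k$ with $k\neq j$ and using $\langle\phi_j,\phi_k\rangle=0$ then gives $\|\Upsilon^u_T\phi_j-\phi_k\|\geq 1-C\|u\|_{L^1}$. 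Hence any $u\in\mathcal{A}_1^\varepsilon(j,k)$ with $\varepsilon<1$ satisfies $\|u\|_{L^1}\geq(1-\varepsilon)/C$, so $\mathcal{C}_1(\phi_j,\phi_k)\geq 1/C>0$.

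For (ii), the diagonal $j=k$ is trivial, so the point is, for $j\neq k$ and any $\eta>0$, to produce a control with $\|u\|_{L^r}<\eta$ driving $\phi_j$ arbitrarily close to $\phi_k$. For a non-degenerate transition $(j,k)\in S$ with frequency $\omega=|\lambda_j-\lambda_k|$ and coupling $b_{jk}=\langle\phi_j,B\phi_k\rangle\neq 0$, the resonant pulse $u_v(t)=v\cos(\omega t+\theta)$ on $[0,T_v]$ with $T_v\sim\pi/(v|b_{jk}|)$ drives $\phi_j$ arbitrarily close to $\phi_k$ (up to a prescribed phase) as $v\to 0$, by the averaging estimates of Section~\ref{SEC_periodic}. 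Its $L^r$-norm obeys $\|u_v\|_{L^r}\sim v\,T_v^{1/r}\sim v^{1-1/r}\to 0$ when $r>1$, while its $L^1$-norm stays bounded from below --- this is precisely the source of the dichotomy. For an arbitrary pair $(j,k)$, the chain of connectedness provides a decomposition into finitely many non-degenerate steps; realising each step by such a resonant pulse and concatenating, and using the disjointness of the supports to estimate $\|\cdot\|_{L^r}^r$ as a sum, bounds the overall $L^r$-norm by a uniform multiple of the largest individual one, hence it still tends to $0$.

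The hard part is controlling the averaging errors along the finite chain: each intermediate state is only approximately equal to the intended eigenvector, and the next resonant pulse must still bring the system $\varepsilon$-close to the following target while the total $L^r$-norm tends to zero. Achieving this rigorously requires a careful joint choice of the amplitudes $v_i$ and durations $T_{v_i}$ governed by the quantitative averaging estimates of Section~\ref{SEC_periodic}, and it is precisely at this step that the hypothesis $r>1$ becomes essential.
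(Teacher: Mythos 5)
Your overall strategy --- a Duhamel/$L^1$ perturbation bound for the strict positivity of $\mathcal{C}_1$, and long, small-amplitude resonant pulses whose $L^r$-norm scales like $v^{1-1/r}$ for the collapse of $\mathcal{C}_r$ when $r>1$, concatenated along a non-degenerate chain of connectedness --- is the right one. Note that the paper itself contains no proof of this proposition (it defers to the cited note \cite{energy}), and your outline is essentially the argument given there. There is, however, one genuine gap in part (i), precisely in the case where $B$ is unbounded. Your estimate
$$\left\|\Upsilon^u_T\phi_j-e^{-\mathrm{i}\lambda_j T}\phi_j\right\|\leq \int_0^T |u(s)|\,\|B\,\Upsilon^u_s\phi_j\|\,\mathrm{d}s\leq C\,\|u\|_{L^1}$$
requires $C\geq \sup_{s}\|B\Upsilon^u_s\phi_j\|$, and the only a priori control available on that quantity (Section~\ref{SEC_estimee_unbounded}) is through $e^{\|B\|_A TV_{[0,T]}(u)/\delta}$, i.e.\ through the \emph{total variation} of $u$, not its $L^1$-norm. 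Since $\mathcal{A}_1^{\varepsilon}(j,k)$ contains controls of arbitrarily large total variation with arbitrarily small $\|u\|_{L^1}$, your constant $C$ is not uniform over the competitors in the infimum, and the conclusion $\mathcal{C}_1(\phi_j,\phi_k)\geq 1/C>0$ does not follow as written. The repair is to project onto $\phi_k$ \emph{before} estimating and to move $B$ onto the fixed eigenvector using the skew-symmetry of $B$ (implicit in the unitarity of $\Upsilon^u_t$): one gets $\frac{\mathrm{d}}{\mathrm{d}t}\left|\langle\phi_k,\Upsilon^u_t\phi_j\rangle\right|\leq |u(t)|\,\|B\phi_k\|$, hence $|\langle\phi_k,\Upsilon^u_T\phi_j\rangle|\leq \|B\phi_k\|\,\|u\|_{L^1}$ with the $u$-independent constant $\|B\phi_k\|<\infty$ (finite because $\phi_k\in D(A)\subset D(B)$), and therefore $\mathcal{C}_1(\phi_j,\phi_k)\geq 1/\|B\phi_k\|>0$.

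Two smaller points. Symmetry of $\mathcal{C}_1$ is not an immediate consequence of ``time reversal plus unitarity'': $(\Upsilon^u_T)^{-1}$ solves the backward equation $\dot\psi=-(A+u(T-t)B)\psi$, which is not of the form (\ref{EQ_main}); one needs either a conjugation symmetry (available when $A$ and $B$ have real, or purely imaginary, matrix elements in $\Phi$) or the observation that both the lower bound $1/\|B\phi_k\|$ above and the chain-based upper bound are symmetric in $j$ and $k$. Finally, in part (ii) Proposition~\ref{PRO_periodic} only gives $|\langle\phi_k,X^{u}_{(\Phi,N)}(T^{\ast}_n,0)\phi_j\rangle|\to 1$, i.e.\ convergence modulo a phase and only for the Galerkin approximation; to conclude $\|\Upsilon^u_{T}\phi_j-\phi_k\|<\varepsilon$ you must correct the phase (e.g.\ by a stretch of free evolution, which costs nothing in any $L^r$-norm) and invoke a Good Galerkin Approximation statement in a norm that your small-$L^r$ controls actually keep bounded. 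These points are all repairable, but a complete proof must address them explicitly.
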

Proposition \ref{PRO_distance_L1} illustrates various flaws of $L^p$ estimates
of system 
(\ref{EQ_main}). First, and contrary to the immediate intuition, $L^p$ norms with $p>1$ 
(and in particular the $L^2$ norm) do not permit to distinguish among the energy levels of 
$A$. Precisely, if $(A,B,\Phi)$ admits a non-degenerate chain of connectedness, for every 
non empty open set $\mathcal V$ in $\mathbf{S}_H$, there exists $u:[0,T]\to 
(-1/a,1/a)$ in $L^p([0,T])$ with $\|u\|_{L^p([0,T])}$ arbitrarly small   
such that $\Upsilon^u_T\phi_1 $ belongs to $\mathcal V$, see 
\cite{boussaid:hal-00710040}.

While the $L^1$ norm allows to distinguish among the energy levels of $A$, the distance 
${\mathcal C}_1$ depends only on $B$ and the non-degenerate chains of connectedness of 
$(A,B,\Phi)$ (and not on the eigenvalues of $A$). For instance, the computation of  ${\mathcal C}_1(\phi_1,\phi_2)$ done in Section IV of \cite{energy} remains valid, with unchanged result, if one replaces $A$ by $\pm \mathrm{i}|A|^k$ for any positive integer $k$.

\subsection{Estimates based on total variation}\label{SEC_BV_estimates}

The following estimates can be deduced from the general theory due to Kato 
\cite{Kato1953}. They are valid in context much broader than Assumption 
\ref{ASS_ass_general}. In particular, there is no need for $H$ to admit a 
Hilbert basis made of eigenvectors of $A$. 

In the following we will impose $u(0)=0$. This is always the case if 
one 
replaces $A$ by $A+u(0)B$. Moreover if $(A,B, \Phi)$ satisfies 
Assumption~\ref{ASS_ass_general} then there exists $\Phi'$ and $b\in\mathbf{R}$ 
such 
that $(A+u(0)B-\mathrm{i}b,B, \Phi')$ satisfies 
Assumption~\ref{ASS_ass_general} as well.

\subsubsection{Bounded case}\label{SEC_estimee_bounded}
\begin{proposition}
Let $(A,B,\Phi)$ satisfy Assumption \ref{ASS_ass_general} with $B$ bounded. Then, for 
every $u:
[0,T]\to \mathbf{R}$ with bounded variation and $u(0)=0$, for every $j$ in 
$\mathbf{N}$, $\|A 
\Upsilon^u_{T} \phi_j\| -\|A \phi_j\| \leq 2\|B\| TV_{[0,T]}(u)$. 
\end{proposition}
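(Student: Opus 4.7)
The plan is to first reduce to piecewise constant controls, then exploit the fact that on each constant-control interval $[t_i,t_{i+1})$ with value $u_i$, the generator $A+u_iB$ is skew-adjoint (since $B$ is bounded, hence $A$-bounded with $\|B\|_A=0$), so the unitary propagator $e^{(t-t_i)(A+u_iB)}$ commutes with its generator. This forces the quantity $I_i:=\|(A+u_iB)\psi(t)\|$ to be constant on the interval. The total variation of $u$ then appears naturally when one tracks how $I_i$ changes from one interval to the next.

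Concretely, for $u$ piecewise constant on $0=t_0<t_1<\cdots<t_n=T$ with $u_0=u(0)=0$, set $\psi_i=\Upsilon^u_{t_i}\phi_j$ and note that at every jump the triangle inequality gives
\[
I_{i+1}=\|(A+u_{i+1}B)\psi_{i+1}\|\leq \|(A+u_iB)\psi_{i+1}\|+|u_{i+1}-u_i|\,\|B\| = I_i+|u_{i+1}-u_i|\,\|B\|.
\]
Telescoping from $i=0$ (where $I_0=\|A\phi_j\|$ thanks to $u_0=0$) and using $\sum_{i=0}^{n-2}|u_{i+1}-u_i|\le TV_{[0,T]}(u)$, we arrive at $I_{n-1}\le \|A\phi_j\|+\|B\|\,TV_{[0,T]}(u)$. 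To finish, I convert back to the $A$-norm via one more triangle inequality,
\[
\|A\psi_n\|\leq I_{n-1}+|u_{n-1}|\,\|B\|,
\]
and observe that $|u_{n-1}|=|u_{n-1}-u(0)|\leq TV_{[0,T]}(u)$, which yields the announced bound $\|A\Upsilon^u_T\phi_j\|-\|A\phi_j\|\le 2\|B\|\,TV_{[0,T]}(u)$ in the piecewise constant case.

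For a general BV control $u$ with $u(0)=0$, I approximate $u$ by a sequence of piecewise constant controls $u^{(n)}$ (say, step functions on a refining partition, starting at $0$) with $u^{(n)}\to u$ pointwise a.e.\ and $TV(u^{(n)})\leq TV_{[0,T]}(u)$. The Kato-type well-posedness result cited in the paper gives convergence of $\Upsilon^{u^{(n)}}_T\phi_j$ to $\Upsilon^u_T\phi_j$ in the graph norm of $A$, so one may pass to the limit in the inequality established above. I expect the main obstacle to be precisely this limiting step: one needs graph-norm (not just $H$-norm) convergence, so the continuity statement from \cite{Kato1953,boussaid:hal-00784876} must be invoked in a form that controls $\|A(\Upsilon^{u^{(n)}}_T-\Upsilon^u_T)\phi_j\|$ uniformly along the approximating sequence; the rest of the argument is purely algebraic manipulation with the conserved invariants $I_i$.
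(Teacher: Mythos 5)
Your proof is correct and follows essentially the same route as the paper's: reduction to piecewise constant controls, conservation of $\|(A+u_iB)\psi\|$ on each constant-control interval, a triangle inequality at each jump contributing one $\|B\|\,TV_{[0,T]}(u)$, and a final conversion back to the $A$-norm via $|u(T)|\le TV_{[0,T]}(u)$ contributing the second. Regarding the limiting step you flag as the main obstacle: graph-norm convergence is not actually required, since convergence in $H$ together with the uniform bound on $\|A\Upsilon^{u^{(n)}}_T\phi_j\|$ and the closedness of $A$ already yields $\|A\Upsilon^{u}_T\phi_j\|\le\liminf_n\|A\Upsilon^{u^{(n)}}_T\phi_j\|$, which is all the inequality needs.
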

\begin{proof}
 Notice that if $u(0)=0$ then $|u(T)|\leq TV_{[0,T]}(u)$.  Hence it is 
enough to prove for every $j$ in $\mathbf{N}$ that $\|(A +u(T)B)
\Upsilon^u_{T} \phi_j\| -\|A \phi_j\| \leq \|B\| TV_{[0,T]}(u)$.

Any bounded variation function can be approximated pointwise by a sequence 
of piecewise constant functions $(u_n)_{n\in \mathbf{N}}$ such that $|u_n| \leq 
|u|$ and $TV_{[0,T]}(u_n)\leq TV_{[0,T]}(u)$. 

Following~\cite{boussaid:hal-00784876}, we have that
$\Upsilon^{u_n}_{T} \phi_j\to \Upsilon^u_{T} \phi_j$. Thus it is sufficient to prove the statement  
for piecewise constant controls.  
The proof for piecewise constant controls follows from the estimate
$\|(A +uB) \exp{(t(A +uB))} \phi\| -\|A \phi\| \leq \|B\| |u|$.
Indeed for a piecwise constant function the associated $\Upsilon^u_{t}$ is a 
product of $\exp{t_i(A +u_iB)}$ for different values of $u_i$ and $t_i$. 
The details of the proof are similar to those of
\cite[Section 2]{boussaid:hal-00784876}.
\end{proof}

\subsubsection{Unbounded case}\label{SEC_estimee_unbounded}
\begin{proposition}
Let $(A,B,\Phi)$ satisfy Assumption \ref{ASS_ass_general} with $B$ unbounded. Then, for 
every $0<\delta<1$, for every $u:[0,T]\to 
(-(1-\delta)/\|B\|_A,(1-\delta)/\|B\|_A)$ with bounded 
variation and $u(0)=0$, for every $\psi$ in $D(A)$, $\|A \Upsilon^u_{T} \psi\| 
\leq e^{\|B\|_A TV_{[0,T]}
(u)/\delta}  \|A \psi\| $. 
\end{proposition}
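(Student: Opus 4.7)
The plan is to adapt the strategy of the bounded case by tracking, in place of the $A$-norm, the \emph{dressed} norm
\[
\phi(t):=\|(A+u(t)B)\,\Upsilon^u_t\psi\|,
\]
which is conserved by the dynamics on every interval where $u$ is constant. First I would approximate $u$ pointwise by a sequence of piecewise constant controls $(u_n)$ with $\|u_n\|_\infty\le\|u\|_\infty$ and $TV_{[0,T]}(u_n)\le TV_{[0,T]}(u)$, and invoke the continuity $\Upsilon^{u_n}_T\psi\to\Upsilon^u_T\psi$ from \cite{boussaid:hal-00784876} already used in the bounded case, so as to reduce to piecewise constant controls. On each subinterval where $u$ takes the constant value $u_i$ with $|u_i|\|B\|_A<1-\delta$, Kato--Rellich ensures that $A+u_iB$ is skew-adjoint on $D(A)$ and commutes with the unitary group it generates, so $\phi$ is constant there; the entire variation of $\phi$ is concentrated at the jumps of $u$.

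The heart of the proof is then to control each jump. At a switch from $u_i$ to $u_{i+1}$ at time $t_i$, writing $\psi_i=\Upsilon^u_{t_i}\psi$,
\[
\phi(t_i^+)-\phi(t_i^-)\le |u_{i+1}-u_i|\,\|B\psi_i\|.
\]
Combining the relative-boundedness inequality $\|B\chi\|\le a\|A\chi\|+b_a\|\chi\|$, valid for every $a>\|B\|_A$, with the trivial estimate $\|A\chi\|\le\|(A+u_iB)\chi\|+|u_i|\|B\chi\|$, I obtain
\[
\|B\chi\|\le\frac{a}{1-a|u_i|}\|(A+u_iB)\chi\|+\frac{b_a}{1-a|u_i|}\|\chi\|.
\]
Under the constraint $|u_i|\|B\|_A\le 1-\delta$, letting $a\downarrow\|B\|_A$ drives the leading coefficient to $\|B\|_A/\delta$. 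The resulting multiplicative inequalities $\phi(t_i^+)\le\bigl(1+(\|B\|_A/\delta)|u_{i+1}-u_i|\bigr)\phi(t_i^-)$, combined with $1+x\le e^x$ and summed over all jumps, yield $\phi(T)\le e^{(\|B\|_A/\delta)TV_{[0,T]}(u)}\phi(0)$. Since $u(0)=0$ gives $\phi(0)=\|A\psi\|$, and since $\|A\,\Upsilon^u_T\psi\|$ can be recovered from $\phi(T)$ by appending a fictitious terminal jump of $u$ back to zero (whose amplitude $|u(T)|$ is itself controlled by $TV_{[0,T]}(u)$), the claim follows in the piecewise-constant limit.

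The main technical obstacle I foresee is the careful disposal of the lower-order term $b_a\|\chi\|/(1-a|u_i|)$, which is absent from the target bound. I expect it to be absorbed either by a BV-Gronwall argument on a linear inhomogeneous jump inequality (exploiting the conservation $\|\Upsilon^u_t\psi\|=\|\psi\|$), or by an a priori reduction replacing $A$ with $A-\mathrm{i}c$ for a sufficiently large constant shift $c$: this makes $b_a$ negligible compared with the dominant part and is compatible with the $u(0)=0$ normalisation already made before the proposition.
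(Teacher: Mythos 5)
Your argument is essentially the paper's own: the note gives no self-contained proof of this proposition, saying only that the unbounded case ``follows the lines of the bounded case'' (pointwise approximation by piecewise constant controls with dominated total variation, conservation of the dressed norm $\|(A+uB)\Upsilon^u_t\psi\|$ between switches, a per-jump estimate, and $1+x\le e^x$), and your inversion of the relative bound to extract the factor $\|B\|_A/\delta$ from the constraint $|u|\,\|B\|_A\le 1-\delta$ is precisely the additional ingredient of the cited Proposition~3 of \cite{boussaid:hal-00784876}. The lower-order term $b_a\|\chi\|$ that you flag is indeed the only loose end, and your second remedy (shifting $A$ by a constant multiple of $-\mathrm{i}$, already licensed by the normalisation discussed at the beginning of Section~\ref{SEC_BV_estimates}) is the intended way to dispose of it.
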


The proof in the unbounded case, which can be found in 
\cite[Proposition~3]{boussaid:hal-00784876}, follows the lines of the bounded case.

\subsection{Good Galerkin Approximations}\label{SEC_GGA}

\begin{assumption}\label{ASS_weakly_coupled}
The quadruple $(A,B,\Phi,k)$ is such that
\begin{enumerate}
\item $(A,B,\Phi)$ satisfies Assumption \ref{ASS_ass_general};
\item $-\mathrm{i}A$ is positive;
\item $k$ is a positive real number;
\item for every $u$ in $\mathbf{R}$, the domains $D(|A+uB|^k)$ of $|A+uB|^k$ and $D(|A|
^k)$ of $|A|^k$ coincide;
\item there exists $d,r$ in $\mathbf{R}$, $r<k$ such that $\|B \psi \| < d \||A|^r \psi \|
$ for every $\psi$; 
\item the supremum $c_k(A,B)$ of the subset of $\mathbf{R}$  $\{|\Re \langle 
|A|^k \psi, B 
\psi \rangle| /|\langle |A|^k \psi,  \psi \rangle|, \psi \in D(|A|^k)\}$ is 
finite.  
\end{enumerate}
\end{assumption}

For every $N$ in $\mathbf{N}$, we define the orthogonal projection
$$
 \pi_N:\psi \in H \mapsto \sum_{j\leq N} \langle \phi_j,\psi\rangle
\phi_j \in H.
$$

\begin{definition}
Let $N \in \mathbf{N}$.  The \emph{Galerkin approximation}  of \eqref{EQ_main}
of order $N$ is the system in $H$
\begin{equation}\label{eq:sigma}
\dot x = \left(A^{(N)} + u B^{(N)}\right) x 
\tag{$\Sigma_{N}$}
\end{equation}
where $A^{(N)}=\pi_N A \pi_N$ and $B^{(N)}=\pi_N B \pi_N$ are the
\emph{compressions} of $A$ and $B$ (respectively).
\end{definition}

We denote by $X^{u}_{(\Phi,N)}(t,s)$ the propagator of \eqref{eq:sigma}.
\begin{definition}
The system $(A,B,\Phi)$ admits a sequence of \emph{Good Galerkin Approximations} (GGA
in short), in time $T\in (0,+\infty]$, in a subspace  $D$ (with
norm $\|\cdot \|_D$) of $H$, in terms of a functional 
norm $N(\cdot)$ on a functional space $\mathbf{U}$  if, 
for any $K,\varepsilon>0$, for any $\psi$ in $D$, there exists $N$ in
$\mathbf{N}$ such that,  for any $u$ 
in $\mathbf U$, $N(u)\leq K$ implies
$\|(X^{u}_{(\mathbf{\Phi},N)}(t,0)-\Upsilon^u_{t,0})\psi \|_D
<\varepsilon$ for any $t<T$.
\end{definition}

\begin{proposition}\label{PROP_GGAWeaklyCoupled}
Let $(A,B,\Phi,k)$ satisfy Assumption \ref{ASS_weakly_coupled}. Then $(A,B,\Phi)$ admits a 
sequence of good Galerkin approximations in infinite time, in $D(A)$  in terms of $L^1$ 
norm for locally integrable controls.
\end{proposition}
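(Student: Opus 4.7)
My plan proves the $D(A)$-convergence in three steps: (i) establish a uniform-in-time $|A|^{k/2}$-bound for both the exact and the Galerkin trajectories when $\|u\|_{L^1}\le K$; (ii) compare the two flows in the $H$-norm via Duhamel; (iii) upgrade to the $D(A)$-norm by interpolation.

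For Step 1, I fix $\psi\in D(|A|^k)$ and differentiate the Lyapunov functional $t\mapsto \langle |A|^k\Upsilon^u_t\psi,\Upsilon^u_t\psi\rangle = \||A|^{k/2}\Upsilon^u_t\psi\|^2$ along the flow. Skew-adjointness of $A$ cancels the free-motion contribution, leaving only the control term, whose modulus is bounded by $2|u(t)|c_k(A,B)\langle |A|^k\Upsilon^u_t\psi,\Upsilon^u_t\psi\rangle$ by item 6 of Assumption~\ref{ASS_weakly_coupled}. Gronwall then yields $\||A|^{k/2}\Upsilon^u_t\psi\|\le e^{c_k(A,B)\|u\|_{L^1}}\||A|^{k/2}\psi\|$, uniformly in $t\ge 0$. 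Since $\pi_N$ commutes with every spectral function of $A$ (the basis $\Phi$ diagonalizes $A$), the same computation applied with $A^{(N)}$ and $B^{(N)}$ gives $c_k(A^{(N)},B^{(N)})\le c_k(A,B)$ and hence the identical bound for $X^u_{(\Phi,N)}(t,0)\pi_N\psi$, uniformly in $N$.

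For Step 2, set $\phi(t)=\Upsilon^u_t\psi$ and $\phi_N(t)=X^u_{(\Phi,N)}(t,0)\pi_N\psi$; the identity $A\pi_N=\pi_N A$ implies that $\delta_N(t)=\pi_N\phi(t)-\phi_N(t)$ satisfies $\dot\delta_N=(A^{(N)}+uB^{(N)})\delta_N+u\,\pi_N B(I-\pi_N)\phi$. Duhamel, unitarity of $X^u_{(\Phi,N)}$, and the relative bound of item 5 give $\|\delta_N(t)\|\le d\int_0^t|u(s)|\,\||A|^r(I-\pi_N)\phi(s)\|\,ds$. Because $r<k$, the spectral tail estimate $\||A|^r(I-\pi_N)\phi(s)\|\le \lambda_{N+1}^{r-k/2}\||A|^{k/2}\phi(s)\|$ combined with Step 1 produces $\|\delta_N(t)\|\le C(K,\psi)\lambda_{N+1}^{-\alpha}$ for some $\alpha>0$, uniformly in $t\ge 0$ and in $u$ with $\|u\|_{L^1}\le K$. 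A trivial spectral bound handles the $(I-\pi_N)\phi(t)$ contribution, yielding $\|\phi(t)-\phi_N(t)\|_H\to 0$. For Step 3, the uniform $|A|^{k/2}$-bound of Step 1 applied to both $\phi$ and $\phi_N$ controls $\||A|^{k/2}(\phi(t)-\phi_N(t))\|$, and the log-convexity inequality $\|A\chi\|\le \|\chi\|^{1-2/k}\||A|^{k/2}\chi\|^{2/k}$ (valid for $k\ge 2$, with a routine adaptation for smaller $k$) converts the $H$-vanishing of Step 2 into $D(A)$-vanishing at rate $\lambda_{N+1}^{-\alpha(1-2/k)}$.

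The main technical hurdle is uniformity in $t\in[0,+\infty)$: every constant must depend on $\|u\|_{L^1}$, $c_k(A,B)$, $d$, $r$, $k$ and $\||A|^k\psi\|$ but never on $t$ or on any horizon $T$. This is precisely what distinguishes $L^1$ from $L^p$ with $p>1$: the Gronwall exponent involves $\int_0^t|u(s)|\,ds\le\|u\|_{L^1}$ and stays finite as $t\to\infty$. A secondary subtlety is checking that $c_k(A^{(N)},B^{(N)})$ is uniformly bounded in $N$; this holds because $\pi_N$ diagonalizes $A$, so the supremum over $\pi_N D(|A|^k)\subset D(|A|^k)$ defining $c_k(A^{(N)},B^{(N)})$ is no larger than $c_k(A,B)$.
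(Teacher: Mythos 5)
Your three-step scheme (a Gronwall bound on $\||A|^{k/2}\Upsilon^u_t\psi\|$ via $c_k(A,B)$, a Duhamel comparison of $\pi_N\Upsilon^u_t\psi$ with the Galerkin flow, and a spectral tail estimate) is exactly the argument of the reference the paper cites for this proposition; the paper itself gives no proof beyond that citation. Your observation that $c_k(A^{(N)},B^{(N)})\le c_k(A,B)$ because $\pi_N$ commutes with $|A|^k$ is the right way to get uniformity in $N$, and you correctly identify the $L^1$ norm of $u$ as what makes the estimate uniform on $[0,+\infty)$.

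Two exponents, however, do not close as you wrote them. First, in Step 2 the tail bound $\||A|^r(I-\pi_N)\phi\|\le\lambda_{N+1}^{r-k/2}\||A|^{k/2}\phi\|$ produces decay only when $r<k/2$; the hypothesis as stated gives only $r<k$, so for $k/2\le r<k$ your $\alpha$ is nonpositive and the Duhamel integral does not vanish. (In the source the relative bound is written with $\||A|^{r/2}\psi\|$ and $r<k$, which is what makes this step work; you should either adopt that normalization or require $r<k/2$.) Second, the interpolation $\|A\chi\|\le\|\chi\|^{1-2/k}\||A|^{k/2}\chi\|^{2/k}$ upgrades $H$-convergence to $D(A)$-convergence only when $k>2$: at $k=2$ the exponent on $\|\chi\|$ vanishes and the inequality yields mere boundedness of the difference in $D(A)$, and for $k<2$ no such inequality holds, so the ``routine adaptation for smaller $k$'' does not exist. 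The $D(A)$ conclusion genuinely requires $k>2$ (equivalently, one proves convergence in $\||A|^{s}\cdot\|$ for every $s<k/2$ and takes $s=1$); make that restriction explicit rather than waving at it.
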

Last proposition is proved in~\cite{weakly-coupled} for piecewise constant 
controls. The generalization to $L^1$ controls follows from~\cite{boussaid:hal-00784876}.

\begin{proposition}\label{PROP_GGA}
Let $d>0$, $r<1$ and $(A,B,\Phi)$ satisfy Assumption \ref{ASS_ass_general} with $\|B\psi\|
\leq d\||A|^r \psi \|$ for every $\psi$ in $D(|A|^r)$. Then $(A,B,\Phi)$ admits 
a 
sequence of good Galerkin approximations in infinite time, in $D(A)$  in terms 
of $TV+L^1$ norm for controls with bounded variation.
\end{proposition}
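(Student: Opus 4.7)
The proof follows the strategy of Proposition~\ref{PROP_GGAWeaklyCoupled}, replacing the $L^{1}$-based a priori estimate (which there rests on $c_k(A,B)<\infty$) by the $TV$-based estimate of Section~\ref{SEC_estimee_unbounded}. The argument splits into a uniform $D(A)$-bound followed by a Duhamel analysis of the Galerkin error.

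Since $r<1$, interpolation between $|A|^{r}$ and the identity yields that for every $\eta>0$ there exists $C_\eta$ with $\|B\psi\|\le\eta\|A\psi\|+C_\eta\|\psi\|$; in particular $\|B\|_A=0$. A refinement of the proof of the proposition of Section~\ref{SEC_estimee_unbounded}, tracking the contribution of the $C_\eta\|\psi\|$ term through the Duhamel bound, delivers for every $K>0$ a finite constant $\gamma(K)$ such that
\[
\|A\Upsilon^{u}_{t}\psi\|\le \gamma(K)\,\|A\psi\|
\]
whenever $TV_{[0,t]}(u)+\|u\|_{L^{1}(0,t)}\le K$ and $u(0)=0$, uniformly in $t\ge 0$. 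The same estimate holds for the Galerkin propagator $X^{u}_{(\Phi,N)}(t,0)$, with the same constant $\gamma(K)$ \emph{independent of $N$}, because $B^{(N)}$ satisfies the analogous relative bound $\|B^{(N)}\phi\|\le d\||A|^{r}\phi\|$ on $\pi_N H$.

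Set $\delta_{N}(t)=\pi_{N}\Upsilon^{u}_{t}\psi-X^{u}_{(\Phi,N)}(t,0)\pi_{N}\psi\in\pi_{N}H$. Since $[\pi_{N},A]=0$, $\delta_N$ solves
\[
\dot\delta_{N}=(A^{(N)}+uB^{(N)})\delta_{N}+u\,\pi_{N}B(I-\pi_{N})\Upsilon^{u}_{t}\psi,\qquad \delta_{N}(0)=0.
\]
The spectral estimate $\||A|^{r}(I-\pi_{N})\phi\|\le\lambda_{N+1}^{r-1}\|A\phi\|$, valid for large $N$ since $2r-2<0$, combined with the hypothesis on $B$, the a priori bound, and the unitarity of $X^{u}_{(\Phi,N)}$ on $\pi_{N}H$, yields by Duhamel's formula $\|\delta_{N}(t)\|\le d\,\gamma(K)\,\lambda_{N+1}^{r-1}\|A\psi\|\,\|u\|_{L^{1}}$, which tends to $0$ as $N\to\infty$ uniformly in $t\ge 0$ and in $u$ with $TV(u)+\|u\|_{L^{1}}\le K$. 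From the decomposition
\[
\Upsilon^{u}_{t}\psi-X^{u}_{(\Phi,N)}(t,0)\psi=\delta_{N}(t)+(I-\pi_{N})\bigl(\Upsilon^{u}_{t}\psi-\psi\bigr),
\]
the static tail $(I-\pi_{N})\psi$ vanishes in $D(A)$ as $N\to\infty$; convergence in $D(A)$ then reduces to showing that $\|A\delta_{N}(t)\|$ and $\|A(I-\pi_{N})\Upsilon^{u}_{t}\psi\|$ tend to $0$ uniformly, which is achieved by iterating the Duhamel argument on these high-regularity pieces and using the spectral gap $1-r>0$ to balance the factor $\|A\pi_{N}\|=\lambda_{N}$ against the decay $\lambda_{N+1}^{r-1}$.

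The main technical obstacle lies in this last $D(A)$-step: since only a relative bound on $B$ is postulated and no control of the commutator $[A,B]$ is available, a direct Gr\"onwall-type estimate for $\|A\delta_{N}\|$ cannot be performed. Instead, the forcing term $\pi_{N}B(I-\pi_{N})\Upsilon^{u}_{t}\psi$ must be spectrally decomposed in the eigenbasis $\Phi$, and the resulting double sum bounded using the summability encoded in $\|B\phi_{k}\|^{2}\le d^{2}\lambda_{k}^{2r}$ and the decay of $(I-\pi_{N})\Upsilon^{u}_{t}\psi$ extracted via the a priori estimate. This follows the technique of~\cite{weakly-coupled}, adapted here to bounded-variation controls by substituting the $TV+L^{1}$ bound of Step~1 for the $L^{1}$ bound used there.
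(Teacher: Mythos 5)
The paper does not actually prove this proposition: it only points to \cite{boussaid:hal-00784876}, so there is no in-text argument to compare against line by line. Judged on its own terms, your first two steps are sound and follow the expected strategy: since $r<1$, the relative bound $\|B\psi\|\le d\||A|^r\psi\|$ does give $\|B\|_A=0$ via $\lambda^r\le\eta\lambda+C_\eta$, the Kato/total-variation argument of Section~\ref{SEC_estimee_unbounded} then yields a bound $\|A\Upsilon^u_t\psi\|\le\gamma(K)\|A\psi\|$ uniform in $t$ and in $u$ with $TV+L^1\le K$, the same bound holds for $X^u_{(\Phi,N)}$ uniformly in $N$ because $B^{(N)}=\pi_NB\pi_N$ inherits the relative bound, and your Duhamel computation for $\delta_N$ together with $\||A|^r(I-\pi_N)\phi\|\le\lambda_{N+1}^{r-1}\|A\phi\|$ correctly gives convergence in the $H$-norm.

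The genuine gap is the $D(A)$-norm convergence, which is the actual content of the statement, and the mechanism you propose for it does not work. Estimating $\|A\delta_N\|\le\|A\pi_N\|\,\|\delta_N\|\le\lambda_N\cdot d\,\gamma(K)\lambda_{N+1}^{r-1}\|A\psi\|\,\|u\|_{L^1}$ produces a factor of order $\lambda_{N+1}^{r}$, which for $r>0$ diverges rather than vanishes: the gap $1-r>0$ is exactly consumed by the single power of $\lambda_N$ you must pay, so there is nothing left to ``balance.'' Likewise, $\|A(I-\pi_N)\Upsilon^u_t\psi\|\to0$ uniformly in $t$ and $u$ does not follow from the uniform bound $\|A\Upsilon^u_t\psi\|\le\gamma(K)\|A\psi\|$ when $\psi$ is merely in $D(A)$; it would require a uniform a priori estimate in $D(|A|^{1+s})$ for some $s>0$, and such an estimate is not a consequence of Assumption~\ref{ASS_ass_general} plus $\|B\psi\|\le d\||A|^r\psi\|$ alone (one would need to control how $B$, or a commutator with $|A|^{s}$, acts between higher-order spaces). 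The missing ingredient is either (i) a higher-order invariance result, after which a density argument in $D(A)$ — using precisely your uniform-in-$N$ constant $\gamma(K)$ to pass from a dense set of more regular data to all of $D(A)$ — closes the proof, or (ii) the finer bookkeeping of \cite{boussaid:hal-00784876}, which works with the conjugated quantity $(A+u(t)B)\Upsilon^u_t\psi$ and exploits the bounded variation of $u$ rather than crude spectral truncation. As written, your last paragraph names the obstacle but does not overcome it.
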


This proposition is proved in~\cite{boussaid:hal-00784876}. Notice, that if imposing $u(0)=0$ for the control term then the $L^1$ norm of the control over any finite time interval is bounded by a multiple of the total variation $TV$.

\section{PERIODIC CONTROL LAWS OF BILINEAR QUANTUM SYSTEMS}\label{SEC_periodic}
\subsection{Averaging theory}\label{SEC_Averaging_theory}
The mathematical concept of averaging of dynamical systems was introduced more than a 
century ago and has now developed into a well-established theory, see for instance the 
books of Guckenheimer \& Holmes \cite{Holmes}, Bullo \& Lewis \cite{Bullo} or Sanders, 
Verhulst \& Murdock \cite{Sanders}. It was observed that, for regular $F$ and small 
$\varepsilon$, the trajectories of the system $\dot{x}=\varepsilon F(x,t,\varepsilon)$
remain $\varepsilon$ close, for time of order $1/\varepsilon$, to the trajectories of the 
average system $\dot{x}=\widetilde{F}(x)$ where $\widetilde{F}(x)=\lim_{t\to \infty} 1/t 
\int_0^t F(x,t,0)$.

In quantum physics, this concept of averaging is used intensively to transfer a system of 
type (\ref{EQ_main}) from an eigenstate of $A$ associated with eigenvalue 
$-\mathrm{i}\lambda_j$ to another associated with eigenvalue $-\mathrm{i}\lambda_k$ with a 
periodic control with small enough amplitude and frequency $|\lambda_j-\lambda_k|$.
 
The following results is proved in \cite{periodic}.
\begin{proposition} \label{PRO_periodic}
Let $(A,B,\Phi)$ satisfy Assumption \ref{ASS_ass_general}.
Assume that $(j,k)$ is a non-degenerate transition of $(A,B,\Phi)$. 
Define $T=2\pi/|\lambda_j-\lambda_k|$ and
let $u^\ast:\mathbf{R}\to (-1/\|B\|_A,1/\|B\|_A)$ be $T$-periodic and with bounded variation on 
$[0,T]$.
If $\displaystyle{\int_0^T \!\!\! u^{\ast} (\tau) e^{  \mathrm{i}
(\lambda_{j}-\lambda_{k})\tau} \mathrm{d}\tau \neq 0}$ and $\displaystyle{\int_0^T \!\!\! u^{\ast} (\tau) e^{  \mathrm{i}
(\lambda_{l}-\lambda_{m})\tau} \mathrm{d}\tau = 0}$ for every $(l,m)$ such that (i) 
$\{j,k\}\neq \{l,m\}$, (ii)  $\{j,k\} \cap \{l,m\}\neq \emptyset$, (iii) $|
\lambda_l-\lambda_m|\in (\mathbf{N}\setminus\{1\}) |\lambda_j-\lambda_k|$ and 
(iv) $
b_{lm}\neq 0$,
  then, for every $n$ in $\mathbf{N}$, there exists $T_n^{\ast}$ in $(nT^{\ast}-
  T,nT^{\ast}+T)$ such that $|\langle 
\phi_k,X^{u}_{(\Phi,N)}(T^{\ast}_n,0) \phi_j \rangle |$ tends to $1$ 
as $n$ tends to infinity, with
$\displaystyle{T^{\ast}=  \frac{\pi  T}{2 |b_{j,k}|  \left |\int_0^T \!\! \!u^{\ast}(\tau)e^{\mathrm{i}
(\lambda_{j}-\lambda_{k}) \tau}  \mathrm{d}\tau \right |},\quad I=\int_0^T \! \!|u^{\ast}
(\tau)|\mathrm{d}\tau,}$
$$ K=\frac{IT^{\ast}}{T}  \mbox{ and } C=\sup_{(j,k)\in \Lambda} \left | 
\frac{\int_0^T u^{\ast}(\tau) 
e^{\mathrm{i} (\lambda_l-\lambda_m)\tau}\mathrm{d}\tau}{\sin \left ( \pi\frac{|
\lambda_l-\lambda_m|}{|\lambda_j-\lambda_k|} \right )} \right |,$$
where $\Lambda$ is the set of all pairs $(l,m)$ in  $\{1,\ldots,N\}^2$  such that $b_{lm} 
\neq 0$ and $\{l,m\}\cap\{j,k\} \neq \emptyset$ and $ |\lambda_l-\lambda_m|\notin 
\mathbf{Z}|\lambda_2-\lambda_1|$.
\end{proposition}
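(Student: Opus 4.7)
The plan is to work in the interaction picture and invoke the averaging machinery recalled in Section~\ref{SEC_Averaging_theory}, applied to the finite-dimensional Galerkin approximation $(\Sigma_N)$. Writing $y(t)=e^{-tA^{(N)}}X^{u^\ast}_{(\Phi,N)}(t,0)\phi_j$ in the eigenbasis $\Phi$, the Galerkin system becomes $\dot{y}_l = u^\ast(t) \sum_m b_{lm}\, e^{\mathrm{i}(\lambda_l-\lambda_m)t}\, y_m$. Since $u^\ast$ is $T$-periodic with $T=2\pi/|\lambda_j-\lambda_k|$, the long-time average of the right-hand side keeps only those matrix entries $(l,m)$ for which $\lambda_l-\lambda_m \in \mathbf{Z}(\lambda_j-\lambda_k)$, and their weights are precisely the Fourier coefficients of $u^\ast$ at those frequencies.

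The non-degeneracy of the transition $(j,k)$ guarantees that the only entries with $|\lambda_l-\lambda_m|=|\lambda_j-\lambda_k|$, $b_{lm}\neq 0$, and meeting $\{j,k\}$ are $(j,k)$ and $(k,j)$ themselves; hypotheses (i)--(iv) on the vanishing Fourier coefficients of $u^\ast$ then eliminate every remaining entry touching $\{j,k\}$ at higher harmonics of $|\lambda_j-\lambda_k|$. Consequently, the averaged system decouples into a skew-Hermitian $2\times 2$ block on the plane $\mathrm{span}(\phi_j,\phi_k)$, with off-diagonal coupling proportional to $b_{jk}\int_0^T u^\ast(\tau) e^{\mathrm{i}(\lambda_j-\lambda_k)\tau}\mathrm{d}\tau/T$, and a residual block that cannot exchange population with this plane. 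The $2\times 2$ block is a Rabi-type rotation whose half-period is exactly $T^\ast$, so the averaged flow sends $\phi_j$ onto $\phi_k$ (up to a phase) at every time $(2n-1)T^\ast$.

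It then remains to transfer this conclusion from the averaged system back to the true propagator $X^{u^\ast}_{(\Phi,N)}$. The standard quantitative averaging estimate controls the difference between the true and averaged flows at sample times $mT$ in terms of the constants $K$, $I$ and $C$ in the statement, which measure, respectively, the cumulative $L^1$-mass of the control up to time $nT^\ast$, the one-period $L^1$-mass, and the worst denominator coming from near-resonances $|\lambda_l-\lambda_m|\notin\mathbf{Z}|\lambda_j-\lambda_k|$ via the summation-by-parts identity $\int e^{\mathrm{i}\omega t}dt = (e^{\mathrm{i}\omega T}-1)/(\mathrm{i}\omega)$. Choosing $T^\ast_n$ as the integer multiple of $T$ nearest to $(2n-1)T^\ast$, which automatically lies in $(nT^\ast-T,nT^\ast+T)$ for large $n$, aligns the true propagator with the averaged one at a full-transfer instant.

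The main obstacle is to prevent the averaging error from accumulating linearly over the horizon $\sim nT^\ast$, which would be fatal as $n\to\infty$. The argument exploits the unitary Hamiltonian structure: the non-resonant contributions integrate to oscillatory, bounded correction terms rather than to drifts, yielding a genuinely time-uniform estimate on the $2\times 2$ invariant plane up to a residual phase, which is then absorbed by the free choice of $T^\ast_n$ within the window of length $2T$. The hypothesis $u^\ast\in(-1/\|B\|_A,1/\|B\|_A)$ is used to apply Proposition~\ref{PROP_GGA} and close the loop between the Galerkin dynamics and the infinite-dimensional propagator $\Upsilon^{u^\ast}_t$.
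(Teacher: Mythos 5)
First, note that the paper does not prove this proposition at all: it is quoted from the reference \cite{periodic}, so the only fair comparison is with the interaction-picture/first-order-averaging argument of that reference, which is indeed the strategy you outline. Your structural analysis is sound: passing to $y(t)=e^{-tA^{(N)}}X(t,0)\phi_j$, identifying the resonant entries as those with $\lambda_l-\lambda_m\in\mathbf{Z}(\lambda_j-\lambda_k)$ weighted by Fourier coefficients of $u^\ast$, and observing that non-degeneracy plus hypotheses (i)--(iv) leave only a $2\times 2$ Rabi block on $\mathrm{span}(\phi_j,\phi_k)$ in the averaged dynamics, is exactly the right skeleton.

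There is, however, a genuine gap: you never introduce the small parameter, and without it the statement cannot be reached. The index $n$ in $T^\ast_n$ and in the limit $n\to\infty$ is tied to the amplitude scaling of the control --- the proposition is applied in the paper with $u=u^\ast/n$ over a horizon of order $nT^\ast$ (see the controls $\sin((2j+1)t)/n_j$ in Sections V.A--V.B, and the factor $1/n$ throughout Section III.B). With a \emph{fixed} periodic control, as in your write-up, the first-order averaging error over a time of order $nT^\ast$ does not tend to zero, so $|\langle\phi_k,X(T^\ast_n,0)\phi_j\rangle|\to 1$ does not follow; it is precisely the combination of amplitude $1/n$ and horizon $O(n)$ that makes the oscillatory (non-resonant) corrections $O(1/n)$ uniformly, with the constants $I$, $K$ and $C$ quantifying that bound after an integration by parts on the small denominators. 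A visible symptom of the missing scaling is that you place the full transfer at the odd half-periods $(2n-1)T^\ast$ of an $O(1)$ Rabi rotation: for large $n$ these times lie far outside the required window $(nT^\ast-T,nT^\ast+T)$, whereas with the $1/n$ scaling the Rabi frequency is itself $O(1/n)$ and the first complete transfer occurs at time $\approx nT^\ast$, which is why $T^\ast_n$ is taken as the nearest integer multiple of $T$ to $nT^\ast$. Finally, the appeal to Proposition~\ref{PROP_GGA} is not needed here, since the statement concerns only the Galerkin propagator $X^{u}_{(\Phi,N)}$; that extension is performed separately in the paper.
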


Notice that Proposition~\ref{PRO_periodic} does not claim that 
$\|A(X^{u}_{(\Phi,N)}(T^{\ast}_n,0)\phi_j-\phi_k)\|$ 
tends to zero as $n$ tends to infinity. However, 
\begin{multline*}
\liminf_{n\to \infty} \|A 
X^{u}_{(\Phi,N)}(T^{\ast}_n,0)\phi_j\|\\\geq 
\liminf_{n\to \infty} \lambda_k |\langle \phi_k, X^{u}_{(\Phi,N)}(T^{\ast}_n,0)
\phi_j\rangle |=\lambda_k. 
\end{multline*}

Using Propostion~\ref{PROP_GGAWeaklyCoupled} or Proposition~\ref{PROP_GGA} 
these can be extended to inifinite dimensional system \eqref{EQ_main} with 
$(A,B,\Phi,k)$ satisfing Assumption \ref{ASS_weakly_coupled} or Assumption 
\ref{ASS_ass_general} with $\|B\psi\|
\leq d\||A|^r \psi \|$ for every $\psi$ in $D(|A|^r)$ for some $d>0$ and $r<1$.

\subsection{Averaging using the sine function}\label{SEC_averaging_sinus}

Let $(A,B,\Phi)$ satisfy Assumption \ref{ASS_ass_general} and $(j,k)$ be a non-degenerate 
transition of $(A,B,\Phi)$.  We define 
$\omega=|\lambda_j-\lambda_k|$. We apply Proposition \ref{PRO_periodic} with 
$u^{\ast}:t\mapsto \sin(\omega t)$. 
For $n$ large enough, $\|u^\ast(t)/n \|_{L^\infty}\leq 1/\|B\|_A$. Straightforward 
computations give
$$T=\frac{2\pi}{\omega},\quad  T^\ast=\frac{\pi}{|b_{jk}|}, \quad
I=\frac{4}{\omega},$$
and we compute
\begin{eqnarray*}
\lefteqn{TV_{[0,T_n^\ast)}\left (\frac{u^{\ast}}{n} \right ) =
\frac{1}{n}\int_0^{T_n^\ast}\omega 
|\cos(\omega t)|\mathrm{d}t}\\
&=&\frac{\omega}{n} \left ( \int_0^{\left \lfloor \frac{nT^{\ast}_n}{T} \right \rfloor T}
\!\!\! |\cos(\omega t)|\mathrm{d}t +\right. 
 \left. \int_{\left \lfloor \frac{nT^{\ast}_n}{T} 
\right \rfloor T} ^{T_n^\ast} \!\!\! |\cos(\omega t)|\mathrm{d}t \right ).
\end{eqnarray*}
As $n$ tends to infinity, $\displaystyle{\frac{\omega}{n}\int_{\left \lfloor 
\frac{nT^{\ast}_n}{T} 
\right \rfloor T} ^{T_n^\ast} \!\!\! |\cos(\omega t)|\mathrm{d}t}$ tends to zero, hence
\begin{eqnarray*}
\lim_{n\to \infty}TV_{[0,T_n^\ast)}\left (\frac{u^{\ast}}{n} \right ) &=&\lim_{n\to 
\infty} \frac{\omega}{n}\left \lfloor \frac{nT^{\ast}_n}{T} \right \rfloor \int_0^T \!\!\! 
|\cos(\omega t)|\mathrm{d}t\\
&=&\lim_{n\to \infty} 4\frac{\omega T_n^\ast}{n}=\frac{2\omega}{|b_{jk}|}.
\end{eqnarray*}

\section{EXAMPLES}\label{SEC_Examples}
\subsection{The bounded case: 2D rotation of a linear molecule}\label{SEC_example_bounded}
Consider a linear molecule whose only degree of freedom is the planar rotation, in a 
fixed plan, about its fixed center of mass.  This system has been thoroughly studied (see the references given in \cite{noiesugny-CDC} or \cite{weakly-coupled} for instance). 

In this model, the Schr\"{o}dinger equation 
reads
\begin{equation}\label{EQ_rotation2D}
 \mathrm{i}\frac{\partial \psi}{\partial t}=-\Delta \psi +\cos\theta \psi, \quad \theta 
 \in \Omega,
\end{equation}
$\Omega=\mathbf{R}/2\pi\mathbf{Z}$ is the unit circle endowed with the Riemannian 
structure inherited from 
$\mathbf{R}$, $H$ is the space of odd functions of $L^2(\Omega,\mathbf{C})$, 
$A=\mathrm{i}\Delta$ ($\Delta$ is the restriction to $H$ of the Laplace-Beltrami operator 
of $\Omega$) and $B:\psi \mapsto (\theta \mapsto \cos(\theta)\psi(\theta))$ is the 
multiplication by cosine. 
 
 In the Hilbert basis $\Phi=(\theta \mapsto \sin(k\theta))_{k \in \mathbf{N}}$ of $H$, $A$ 
is diagonal with diagonal $-\mathrm{i}k^2, k=1\ldots\infty$ and $B$ is tri-diagonal with 
$b_{k,k}=0, b_{k,k+1}=-\mathrm{i}/2$, $b_{j,k}=0$ for every $k,j$ in $\mathbf{N}$ such 
that $|j-k|>1$. 

The triple $(A,B,\Phi)$ satisfies Assumption \ref{ASS_ass_general}, $B$ is bounded and $\|
B\psi\|\leq 0\|A\psi\|+\sqrt{2}\|\psi\|$ for every $\psi$ in $H$. The set $\{(k,k+1),k\in \mathbf{N}\}$ is a non-degenerate chain of connectedness for $(A,B,\Phi)$.

For every $j,n_j$ in $\mathbf{N}$, we define the control 
$u^{\ast,j,n_j}:t\in[0,2n_j\pi]\mapsto \sin((2j+1)t)/n_j$, and
for every $N$ in $\mathbf{N}$, we define $u^{\ast,(n_1,n_2,\ldots,n_{N-1})}$ by the 
concatenation of $u^{\ast,1,n_1}$, $u^{\ast,2,n_2}$, $\ldots$, $u^{\ast,N-1,n_{N-1}}$. 

By Proposition \ref{PRO_periodic}, $$\liminf_{n_1,\ldots,n_{N-1} \to \infty}\|
A\Upsilon^{u^{\ast,(n_1,n_2,\ldots,n_{N-1})}}\phi_1\|\geq \lambda_{N}=N^2.$$

From Section \ref{SEC_averaging_sinus}, we compute
\begin{eqnarray*}
\lefteqn{\liminf_{n_1,n_2,\ldots,n_{N-1}\to \infty}\!\!\!\! TV_{[0,2\pi(n_1+n_2+\ldots+n_{N-1}]}(u^{\ast,(n_1,n_2,\ldots,n_{N-1})})}\\
&~~~~~~~~~~~~~~~~~~~~~~~~~~~~~~~~~~=&\sum_{j=1}^{N-1} 4(2j+1)=4N^2,
\end{eqnarray*}
which proves Proposition \ref{PRO_main_bounded}.
\subsection{The unbounded case: perturbation of the harmonic oscillator}\label{SEC_example_unbounded}
The second model we consider is a perturbation of the quantum harmonic oscillator, with 
dynamics give by
\begin{equation}\label{EQ_harmonic_oscillator}
\mathrm{i} \frac{ \partial \psi}{\partial t }= \left \lbrack (-\Delta+x^2)+ 
(-\Delta+x^2)^{-1} \right \rbrack \psi +u(t) x^2 \psi. 
\end{equation} 
With the notations of Section \ref{SEC_notations}, $H$ is the Hilbert space of the odd 
functions of $L^2(\mathbf{R},\mathbf{C})$, $A=-\mathrm{i}\left \lbrack (-\Delta+x^2)+ 
(-\Delta+x^2)^{-1} \right \rbrack $ where $\Delta$ is the restriction of the Laplacian to 
the space of odd functions and $B$ is the multiplication, in $H$ by $-\mathrm{i}x^2$.
Denoting by $H_n$ the $n^{th}$ Hermite function, we check that 
$A H_{2n-1}=-\mathrm{i}((4n-1)+(4n-1)^{-1})H_{2n-1}$, hence $\Phi=(H_{2n-1})_{n\in 
\mathbf{N}}$ is a 
Hilbert basis of $H$ made of eigenvectors of $A$. 
Moreover, $B H_1=- \mathrm{i} (1/2 H_1 +\sqrt{3/2} H_3)$ and, 
 for every $n$ in $\mathbf{N}$, $n\geq 2$,
\begin{eqnarray*}
\lefteqn{B H_{2n-1}=-\mathrm{i}\left \lbrack \sqrt{n\left (n-\frac{1}{2}\right )}H_{2n-3}+\left (n-\frac{1}{2} \right ) H_{2n-1} \right.} \\
 &&\quad \quad  \quad \quad  \quad+ \left. \sqrt{n\left (n+\frac{1}{2}\right )} H_{2n+1}\right \rbrack.\quad \quad \quad \quad \quad \quad \quad 
\end{eqnarray*}
In the basis $\Phi$, $A$ is diagonal with diagonal entries $(-\mathrm{i}((4n-1)+(4n-1)^{-1}))_{n\in \mathbf{N}}$ and 
$B$ is tri-diagonal. 
The system $(A,B,\Phi)$ is tri-diagonal in the sense of  
\cite{boussaid:hal-00784876} and satisfies Assumption \ref{ASS_ass_general} with 
$\|B\|_A \leq \frac{\sqrt{6}}{4}$ (Proposition 12 of 
\cite{boussaid:hal-00784876} applied with $r=1$ and $C=1/4$). 

For every $j,n_j$ in $\mathbf{N}$, we define the control 
$u^{\ast,j,n_j}:t\in[0,2n_j\pi]\mapsto \sin((2j+1)t)/n_j$, and
for every $N$ in $\mathbf{N}$, we define $u^{\ast,(n_1,n_2,\ldots,n_{N-1})}$ by the 
concatenation of $u^{\ast,1,n_1}$, $u^{\ast,2,n_2}$, $\ldots$, $u^{\ast,N-1,n_{N-1}}$. 

By Proposition \ref{PRO_periodic}, $$\liminf_{n_1,\ldots,n_{N-1} \to \infty}\|
A\Upsilon^{u^{\ast,(n_1,n_2,\ldots,n_{N-1})}}\phi_1\|\geq \lambda_{N}\geq 4 N-1.$$

From Section \ref{SEC_averaging_sinus}, we compute, similarly to what we have done in Section \ref{SEC_estimee_bounded}, 
\begin{eqnarray*}
\lefteqn{\liminf_{n_1,n_2,\ldots,n_{N-1}\to \infty}\!\!\!\!\! TV_{[0,2\pi(n_1+n_2+\ldots+n_{N-1}]}(u^{\ast,(n_1,n_2,\ldots,n_{N-1})})}\\
&~~~~~~~~~~~~~~~~~~~~~~~~~~~~~~~~=&\sum_{j=1}^{N-1} \frac{2}{N}\\
&~~~~~~~~~~~~~~~~~~~~~~~~~~~~~~~~~\leq & 2\log(N+1),\quad \quad \quad \quad 
\end{eqnarray*}
which proves Proposition \ref{PRO_main_unbounded}.
\addtolength{\textheight}{-20.5cm}
\section{CONCLUSIONS}

\subsection{Contribution}
We  exhibited two examples showing that the Kato estimates for the $A$-norm of the 
solutions of a  bilinear quantum system,  with bounded or unbounded coupling term,  are 
optimal up to a multiplicative constant. These estimates are given in terms of the total 
variation of the control. 

\subsection{Perspectives}
An interesting, and probably difficult, question is the optimal control of bilinear 
quantum systems when the cost is the total variation of the control. Approximation 
procedures (as the Good Galerkin Approximations presented in this note) allow to consider 
only a finite dimensional problem. The main difficulty will come from the non-smoothness of the cost (total variation) which will lead to the use of tools of non-smooth analysis.
\section{ACKNOWLEDGEMENTS}

This work has been partially supported by INRIA Nancy-Grand Est, by
French Agence National de la Recherche ANR ``GCM'' program
``BLANC-CSD'', contract number NT09-504590 and by European Research
Council ERC StG 2009 ``Ge\-Co\-Methods'', contract number
239\-748.

It is a pleasure for the third author to thank Ugo Boscain for having attracted his attention to this question.

\bibliographystyle{IEEEtran}
\bibliography{biblio}

\begin{thebibliography}{10}
\providecommand{\url}[1]{#1}
\csname url@samestyle\endcsname
\providecommand{\newblock}{\relax}
\providecommand{\bibinfo}[2]{#2}
\providecommand{\BIBentrySTDinterwordspacing}{\spaceskip=0pt\relax}
\providecommand{\BIBentryALTinterwordstretchfactor}{4}
\providecommand{\BIBentryALTinterwordspacing}{\spaceskip=\fontdimen2\font plus
\BIBentryALTinterwordstretchfactor\fontdimen3\font minus
  \fontdimen4\font\relax}
\providecommand{\BIBforeignlanguage}[2]{{%
\expandafter\ifx\csname l@#1\endcsname\relax
\typeout{** WARNING: IEEEtran.bst: No hyphenation pattern has been}%
\typeout{** loaded for the language `#1'. Using the pattern for}%
\typeout{** the default language instead.}%
\else
\language=\csname l@#1\endcsname
\fi
#2}}
\providecommand{\BIBdecl}{\relax}
\BIBdecl

\bibitem{bms}
J.~M. Ball, J.~E. Marsden, and M.~Slemrod, ``Controllability for distributed
  bilinear systems,'' \emph{SIAM J. Control Optim.}, vol.~20, no.~4, pp.
  575--597, 1982.

\bibitem{Turinici}
G.~Turinici, ``On the controllability of bilinear quantum systems,'' in
  \emph{Mathematical models and methods for ab initio Quantum Chemistry}, ser.
  Lecture Notes in Chemistry, M.~Defranceschi and C.~{Le Bris}, Eds.,
  vol.~74.\hskip 1em plus 0.5em minus 0.4em\relax Springer, 2000.

\bibitem{beauchard}
K.~Beauchard, ``Local controllability of a 1-{D} {S}chr{\"o}dinger equation,''
  \emph{J. Math. Pures Appl.}, vol.~84, no.~7, pp. 851--956, 2005.

\bibitem{beauchard-coron}
K.~Beauchard and J.-M. Coron, ``Controllability of a quantum particle in a
  moving potential well,'' \emph{J. Funct. Anal.}, vol. 232, no.~2, pp.
  328--389, 2006.

\bibitem{nersesyan}
V.~Nersesyan, ``Global approximate controllability for {S}chr{\"o}dinger
  equation in higher {S}obolev norms and applications,'' \emph{Ann. Inst. H.
  Poincar{\'e} Anal. Non Lin{\'e}aire}, vol.~27, no.~3, pp. 901--915, 2010.

\bibitem{Nersy}
------, ``Growth of {S}obolev norms and controllability of the
  {S}chr{\"o}dinger equation,'' \emph{Comm. Math. Phys.}, vol. 290, no.~1, pp.
  371--387, 2009.

\bibitem{beauchard-nersesyan}
K.~Beauchard and V.~Nersesyan, ``Semi-global weak stabilization of bilinear
  {S}chr\"odinger equations,'' \emph{C. R. Math. Acad. Sci. Paris}, vol. 348,
  no. 19-20, pp. 1073--1078, 2010.

\bibitem{Mirrahimi}
K.~Beauchard, J.~M. Coron, M.~Mirrahimi, and P.~Rouchon, ``Implicit {L}yapunov
  control of finite dimensional {S}chr{\"o}dinger equations,'' \emph{Systems
  Control Lett.}, vol.~56, no.~5, pp. 388--395, 2007.

\bibitem{MR2168664}
M.~Mirrahimi, P.~Rouchon, and G.~Turinici, ``Lyapunov control of bilinear
  {S}chr{\"o}dinger equations,'' \emph{Automatica J. IFAC}, vol.~41, no.~11,
  pp. 1987--1994, 2005.

\bibitem{mirra-solo}
M.~Mirrahimi, ``Lyapunov control of a particle in a finite quantum potential
  well,'' in \emph{Proceedings of the 45th IEEE Conference on Decision and
  Control}, December 2006.

\bibitem{Schrod}
T.~Chambrion, P.~Mason, M.~Sigalotti, and U.~Boscain, ``Controllability of the
  discrete-spectrum {S}chr{\"o}dinger equation driven by an external field,''
  \emph{Ann. Inst. H. Poincar{\'e} Anal. Non Lin{\'e}aire}, vol.~26, no.~1, pp.
  329--349, 2009.

\bibitem{Schrod2}
U.~Boscain, M.~Caponigro, T.~Chambrion, and M.~Sigalotti, ``A weak spectral
  condition for the controllability of the bilinear {S}chr\"odinger equation
  with application to the control of a rotating planar molecule,'' \emph{Comm.
  Math. Phys.}, vol. 311, no.~2, pp. 423--455, 2012.

\bibitem{katino}
T.~Kato, \emph{Perturbation theory for linear operators}, ser. Classics in
  Mathematics.\hskip 1em plus 0.5em minus 0.4em\relax Berlin: Springer-Verlag,
  1995, reprint of the 1980 edition.

\bibitem{PhysRevA.42.1065}
\BIBentryALTinterwordspacing
M.~Dahleh, A.~P. Peirce, and H.~Rabitz, ``Optimal control of uncertain quantum
  systems,'' \emph{Phys. Rev. A}, vol.~42, pp. 1065--1079, Aug 1990. [Online].
  Available: \url{http://link.aps.org/doi/10.1103/PhysRevA.42.1065}
\BIBentrySTDinterwordspacing

\bibitem{symeon}
S.~Grivopoulos and B.~Bamieh, ``Optimal population transfers in a quantum
  system for large transfer time,'' \emph{IEEE Trans. on Autom. Control},
  vol.~53, no.~4, pp. 980--992, 2008.

\bibitem{energy}
N.~Boussaid, M.~Caponigro, and T.~Chambrion, ``Which notion of energy for
  bilinear quantum systems?'' in \emph{Proceedings of the 4th IFAC Workshop on
  Lagrangian and Hamiltonian Methods for Non Linear Control}, august 2012, pp.
  226--230.

\bibitem{Kato1953}
T.~Kato, ``Integration of the equation of evolution in a {B}anach space,''
  \emph{J. Math. Soc. Japan}, vol.~5, pp. 208--234, 1953.

\bibitem{boussaid:hal-00784876}
\BIBentryALTinterwordspacing
N.~Boussaid, M.~Caponigro, and T.~Chambrion,
  ``\BIBforeignlanguage{Anglais}{{Energy Estimates for Low Regularity Bilinear
  Schr{\"o}dinger Equations}},'' Feb. 2013. [Online]. Available:
  \url{http://hal.archives-ouvertes.fr/hal-00784876}
\BIBentrySTDinterwordspacing

\bibitem{boussaid:hal-00710040}
\BIBentryALTinterwordspacing
------, ``\BIBforeignlanguage{Anglais}{{Small time reachable set of bilinear
  quantum systems}},'' in \emph{\BIBforeignlanguage{Anglais}{{Proceedings of
  the IEEE 51st Conference on Decision and Control (CDC)}}}, Maui, HI,
  {\'E}tats-Unis, Feb. 2013, pp. 1083--1087. [Online]. Available:
  \url{http://hal.archives-ouvertes.fr/hal-00710040}
\BIBentrySTDinterwordspacing

\bibitem{weakly-coupled}
N.~Boussa{\"{i}}d, M.~Caponigro, and T.~Chambrion,
  ``\BIBforeignlanguage{Anglais}{{Weakly-coupled systems in quantum
  control}},'' \emph{\BIBforeignlanguage{Anglais}{to appear in IEEE TAC}},
  2013, arXiv:1109.1900v1.

\bibitem{Holmes}
J.~Guckenheimer and P.~Holmes, \emph{Nonlinear oscillations, dynamical systems,
  and bifurcations of vector fields}, ser. Applied Mathematical Sciences.\hskip
  1em plus 0.5em minus 0.4em\relax New York: Springer-Verlag, 1983, vol.~42.

\bibitem{Bullo}
F.~Bullo and A.~D. Lewis, \emph{Geometric control of mechanical systems}, ser.
  Texts in Applied Mathematics.\hskip 1em plus 0.5em minus 0.4em\relax New
  York: Springer-Verlag, 2005, vol.~49, modeling, analysis, and design for
  simple mechanical control systems.

\bibitem{Sanders}
J.~A. Sanders and F.~Verhulst, \emph{Averaging methods in nonlinear dynamical
  systems}, ser. Applied mathematical sciences.\hskip 1em plus 0.5em minus
  0.4em\relax Springer-Verlag, 1985.

\bibitem{periodic}
T.~Chambrion, ``Periodic excitations of bilinear quantum systems,''
  \emph{Automatica J. IFAC}, vol.~48, no.~9, pp. 2040--2046, 2012.

\bibitem{noiesugny-CDC}
U.~Boscain, T.~Chambrion, P.~Mason, M.~Sigalotti, and D.~Sugny,
  ``Controllability of the rotation of a quantum planar molecule,'' in
  \emph{Proceedings of the 48th IEEE Conference on Decision and Control}, 2009,
  pp. 369--374.

\end{thebibliography}

\end{document}